\documentclass[11pt]{article}
\usepackage{graphics}
\usepackage{epsfig}
\usepackage{amsmath}
\usepackage{amssymb}
\usepackage{latexsym}
\usepackage{amscd}
\usepackage{psfrag}
\usepackage{rotating}
\usepackage{amsfonts}
\usepackage{amsthm}
\usepackage{float}
%\restylefloat{table}

\setlength{\oddsidemargin}{0.5cm}

\setlength{\textwidth}{15cm}

\setlength{\topmargin}{-0.5cm}

\setlength{\textheight}{22cm}

\newtheorem{theorem}{Theorem}[section]
\newtheorem{lemma}{Lemma}[section]
\newtheorem{remark}{Remark}[section]

\newtheorem{example}{Example}[section]
\newtheorem{proposition}{Proposition}[section]

\parskip=4pt

\begin{document}

\title{The structure of mirrors on Platonic surfaces}
\bigskip\bigskip\bigskip
\author{Adnan Meleko\u glu\\ Department of Mathematics\\ Faculty of Arts and Sciences \\
Adnan Menderes University \\ 09010 Ayd\i n, TURKEY\\
email: amelekoglu@adu.edu.tr \and
David Singerman\\ School of Mathematics\\ University of Southampton\\
 Southampton, SO17 1BJ\\ ENGLAND\\ email:
d.singerman@soton.ac.uk }
\date{}

\maketitle

\begin{abstract}

A Platonic surface is a Riemann surface that underlies a regular map and so we can consider its vertices, edge-centres and face-centres. A symmetry (anticonformal involution) of the surface will fix a number of simple closed curves which we call mirrors. These mirrors must pass through the vertices, edge-centres and face-centres in some sequence which we call the pattern of the mirror. Here we investigate these patterns for various well-known families of Platonic surfaces, including genus 1 regular maps,  and regular maps on  Hurwitz surfaces and Fermat curves. The genesis of this paper is classical. Klein in Section 13 of his famous 1878 paper \cite{Kle}, worked out the pattern of the mirrors on the Klein quartic and Coxeter in his book on regular polytopes worked out the patterns for mirrors on the regular solids. We believe that this topic has not been pursued since then.

\end{abstract}

\noindent \textbf{Keywords}: Riemann surface, Platonic surface,
regular map, mirror, pattern, link index.

\noindent \textbf{Mathematics Subject Classifications}: 30F10,
05C10.

\section{Introduction}
\setcounter{equation}{0}

A compact Riemann surface $X$ is \emph{Platonic} if it admits a
regular map. By a map (or clean dessin d'enfant) on $X$ we mean an
embedding of a graph $\mathcal{G}$ into $X$ such that $X\setminus
\mathcal G$ is a union of simply connected polygonal regions,
called faces. A  map thus has vertices, edges and faces. A
directed edge is called a \emph{dart} and a map is called
\emph{regular} if its automorphism group acts transitively on its
darts.  The Platonic solids are the most well-known examples of
regular maps. These lie on the Riemann sphere.  To be more
precise, we recall how we study maps using triangle groups. The
\emph{universal map of type} $\{m,n\}$ is the tessellation of one
of the three simply connected Riemann surfaces, $\mathcal{U}$, the
Riemann sphere $\Sigma$, the Euclidean plane $\mathbb{C}$, or the
hyperbolic plane, $\mathbb{H}$ depending on whether the genus of
the surface is 0, 1, or $>1$ by regular $m$-gons with $n$ meeting
at each vertex. This map is denoted by  $\hat{\mathcal M}(m,n)$.
The automorphism group,  and also the conformal automorphism group
of $\hat{\mathcal M}(m,n)$  is the triangle group $\Gamma[2,m,n]$.
This is the group with presentation
\begin{eqnarray*}
\langle A,B,C \mid A^2=B^m=C^n=ABC=1\rangle,
\end{eqnarray*}
 the orientation preserving subgroup of the extended triangle group $\Gamma(2,m,n)$
 generated by three reflections $P$, $Q$, $R$ as described in \S 2.

 In general, a map is of \emph{type} $\{m,n\}$ if $m$ is the lcm of the face sizes and $n$ is the lcm of the vertex valencies. As shown in \cite{JS}, every map of type $\{m,n\}$ is  a quotient of $\hat{\mathcal M}(m,n)$ by a subgroup $M$ of the triangle group $\Gamma[2,m,n]$. Then  $M$ is called a \emph{map subgroup} of $\hat{\mathcal M}(m,n)$ or sometimes a fundamental group of $\hat{\mathcal M}(m,n)$, inside $\Gamma[2,m,n]$. Thus, associated with a map $\mathcal M$ we have a unique Riemann surface $\mathcal{U}/M$. The map is regular if and only if $M$ is a normal subgroup of $\Gamma[2,m,n]$. Thus, a Platonic surface is one of the form $\mathcal{U}/M$ where $M$ is a normal subgroup of a triangle group and $\mathcal{U}$ is a simply connected Riemann surface.  A \emph{symmetry} of a Riemann surface $X$ is an anticonformal involution $T\colon X\rightarrow X$ and a \emph{symmetric} Riemann  surface is one that admits a symmetry, for example the Riemann sphere  admits complex conjugation as a symmetry.

We are interested in the fixed point set of a symmetry. This is given by a classical theorem of Harnack which tells us that the fixed point set of a symmetry $T$ consists of a number $k$ of disjoint Jordan curves, where $0\le k\le g+1$ and $g$ is the genus of $X$. Each such Jordan curve is called a \emph{mirror} of $T$.

On a regular map $\mathcal M$ a special role is played by the
fixed points of automorphisms of the map (or the Riemann surface).
These are the vertices, edge-centres and face-centres of $\mathcal
M$. We call such points the \emph{geometric points} of $\mathcal
M$. As an automorphism preserves the vertices, edge-centres and
face-centres, it follows that a mirror must pass through vertices,
edge-centres and face-centres. Let us denote a vertex by
$\mathbf{0}$, an edge-centre by $\mathbf{1}$ and a face-centre by
$\mathbf{2}$. This notation comes from Coxeter~\cite{Co}. If we
travel along a mirror we will in succession meet a sequence of
geometric points of the map, and if we have a map on a compact
surface then this sequence is finite. This sequence is called the
\emph{pattern} of the mirror. We sometimes use the term pattern of
a symmetry if the symmetry fixes a unique mirror. In \cite{Co},
Section 4.5 the patterns of the mirrors of the regular solids are
computed and displayed here in Table~\ref{patternspherical} in
\S2.

As an example consider the dodecahedral map on the Riemann sphere in Figure~\ref{Dodecahedron}. The reflection $z\mapsto \bar z$ fixes the equator which passes through vertices, edge-centres and face-centres giving the patten $\mathbf{010212010212}$, which we abbreviate to $(\mathbf{010212})^2$, see Figure~\ref{Dodecahedron}.

\begin{figure}[htb]
  \centering
  \includegraphics[width=0.3\textwidth]{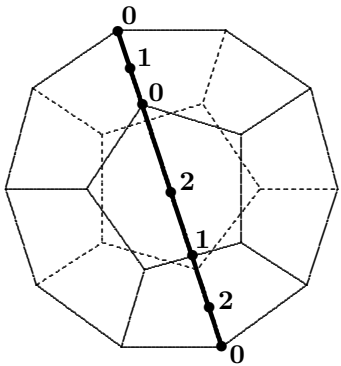}
  \caption{Dodecahedron}
  \label{Dodecahedron}
\end{figure}

 However well before Coxeter,  Felix Klein had discussed this very idea in his famous paper where he introduced the Klein quartic. This corresponds to his  Riemann surface of genus 3 with automorphism group PSL(2,7) of order 168.  Associated with this surface is the well-known map of type $\{3,7\}$. The Klein surface is symmetric and in Section 13 of his paper Klein points out that on his surface a mirror has pattern
$(\mathbf{010212})^3$. (However in Klein's paper (see \cite{Kle}, p465 or \cite{Levy}, p322) he uses $a,b,c$ for $\mathbf{2,0,1}$.)

Our aim is to discuss patterns for Platonic surfaces in general.

\section{Patterns of mirrors}
\setcounter{equation}{0}

An automorphism of a Riemann surface $X$ is a biholomorphic homeomorphism
$T\colon X\rightarrow X$. It is known that if we have a map $\mathcal M$ on a surface
$X$, then this induces a natural complex structure on $X$ that makes $X$ into a Riemann surface.
(See \cite{Grot, JS, Si1976}).
An automorphism of the map $\mathcal M$ is a automorphism of the Riemann surface
$X$ that transforms the map to itself and preserves incidence.  We let $Aut X$,
$Aut^\pm X$ denote the conformal automorphism group of $X$, and the extended
automorphism group of $X$, respectively, where the extended automorphism group allows
anticonformal automorphisms. Similarly, we let $Aut\mathcal M$ and $Aut^\pm\mathcal{M}$
denote the sense preserving automorphism group of $\mathcal M$ and the full automorphism group of
$\mathcal M$, respectively. We have $Aut \mathcal M$ lies in $Aut X$ with a similar remark
about the full groups. In most cases, these groups coincide although there are cases where
the automorphism group of the Riemann surface is strictly larger than the automorphism group
of the map. A  regular map is called \emph{reflexible} if it admits an
orientaion-reversing automorphism, (often this will be a reflection) and then
$|Aut^\pm\mathcal M|=2|Aut\mathcal M|.$

Let $X$ be a Riemann surface of genus $g$ and let $\mathcal{M}$
be a regular map of type $\{m,n\}$ on $X$. Let $F$ be a face of
$\mathcal{M}$. If we join the centre of $F$ to the centres of the
edges and the vertices surrounding $F$ by geodesic arcs, we obtain
a subdivision of $F$ into $2m$ triangles. Each triangle has angles
$\pi/2$, $\pi/m$ and $\pi/n$, and will be called a
\emph{$(2,m,n)$-triangle}. In this way, we obtain a triangulation
of $X$. Note that there are as many $(2,m,n)$-triangles as the
order of $Aut^\pm\mathcal{M}$, and the reflexibility of
$\mathcal{M}$ implies that $Aut^\pm\mathcal{M}$ is transitive on
these triangles.

Let $T$ be a $(2,m,n)$-triangle on $X$ and let $P$, $Q$ and $R$
denote the reflections in the sides of $T$, as indicated in Figure~\ref{Figuretriangle}. These
reflections satisfy the relations
\begin{eqnarray}
P^2=Q^2=R^2=(PQ)^2=(QR)^m=(RP)^n=1. \label{presentation2mn}
\end{eqnarray}
The triangle group $\Gamma[2,m,n]$ is generated by $A=PQ$, $B=QR$ and $C=RP$ obeying the relations
\[
A^2=B^m=C^n=ABC=1.
\]
The reflections $P$, $Q$ and $R$ generate $Aut^\pm\mathcal{M}$.
However, the relations in (\ref{presentation2mn}) only give a finite
group if $g=0$ and hence in other cases we need at least one more
relation to get a presentation for $Aut^\pm\mathcal{M}$. The pair of any
two successive geometric points on a mirror is either $\mathbf{01}$,
$\mathbf{02}$ or $\mathbf{12}$ (or in reverse order). Let $T^\ast$ be
a $(2,m,n)$-triangle. Then we see that each corner of $T^\ast$
either is a vertex, a face-center or an edge-center of
$\mathcal{M}$. So we can label the corners of $T^\ast$ with
$\mathbf{0}$, $\mathbf{1}$ and $\mathbf{2}$. Each of the pairs
$\mathbf{01}$, $\mathbf{02}$ and $\mathbf{12}$ corresponds to one of the
sides of $T^\ast$. We will call the corresponding sides of
$T^\ast$ the $\mathbf{01}$-\emph{side}, the $\mathbf{02}$-\emph{side}
and the $\mathbf{12}$-\emph{side}. See Figure~\ref{Figuretriangle}.

\begin{figure}[htb]
  \centering
  \includegraphics[width=0.35\textwidth]{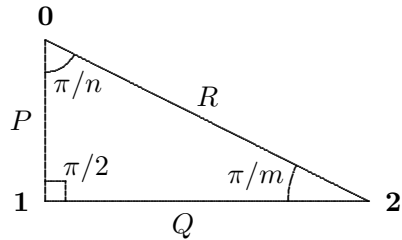}
  \caption{$(2,m,n)$-triangle}
  \label{Figuretriangle}
\end{figure}

Let $M$ be a mirror on $X$ fixed by a reflection. Then $M$ passes
through some geometric points of $\mathcal{M}$. These geometric points
form a periodic sequence of the form
\begin{eqnarray}
\underbrace{a_1a_2\dots a_{k-1}a_k}_{1}\, \underbrace{a_1a_2\dots
a_{k-1}a_k}_{2}\,\dots \underbrace{a_1a_2\dots a_{k-1}a_k}_{N}
\label{pattern}
\end{eqnarray}
which we call the \emph{pattern} of $M$, where $a_i\in
\{\mathbf{0},\mathbf{1},\mathbf{2}\}$ and $1\leq i\leq k$. We call
 $a_1a_2\dots a_{k-1}a_k$ of (\ref{pattern}) a
\emph{link} of the pattern and call $N$ the \emph{link index}.
 We abbreviate the pattern (\ref{pattern})  to $(a_1a_2\dots
a_{k-1}a_k)^N$. For example, from the introduction we see that for
the dodecahedron the links are all $(\mathbf{010212})^2$ so the
link index is 2. For the Klein map we have the same link but now
the link index is 3. As Klein thought of his map as being a higher
genus version of the dodecahedron it is interesting to note the
similarities in their patterns!

\emph{As a Riemann surface determines a unique map, we can talk
about a pattern of a Riemann surface. Note that the dual map is
also associated with this Riemann surface but the patterns for the
dual are found by interchanging $\mathbf{0}$s and $\mathbf{2}$s. }

We display the patterns for the Platonic solids (the regular maps on the sphere) in Table~\ref{patternspherical}. Most of this is from Coxeter \cite{Co} \S4.5, except for the easy cases of the dihedron and its dual hosohedron. Note that in Table~\ref{patternspherical},  the tetrahedron, type $\{3,3\}$ or the icosahedron, type $\{3,5\}$
have only one pattern listed but for the octahedron, type $\{3,4\}$ there are two patterns listed. We shall see why soon.

\begin{table}[H]
\begin{center}
\caption{\label{patternspherical} Spherical Maps and Patterns}
\begin{tabular}{|l|c|c|l|}
  \hline
 Platonic Solid  &   Map Type & Number of Mirrors & Pattern \\
  \hline\hline
  \cline{1-4} Tetrahedron  & $$\{3,3\}$$ & 6 &$\mathbf{010212}$ \\

  \hline
\cline{1-4} &  & 3& $(\mathbf{01})^4$ \\

    \cline{3-4} \raisebox{1.5ex}[0pt]{Octahedron}  & \raisebox{1.5ex}[0pt]{$\{3,4\}$} &6&$(\mathbf{0212})^2$
    \\

    \hline
\cline{1-4} &  & 3& $(\mathbf{12})^4$ \\

    \cline{3-4} \raisebox{1.5ex}[0pt]{Cube}  & \raisebox{1.5ex}[0pt]{$\{4,3\}$} &6&$(\mathbf{0102})^2$
    \\

    \hline
\cline{1-4} Icosahedron  & $\{3,5\}$ & 15 &$(\mathbf{010212})^2$ \\

\hline
\cline{1-4} Dodecahedron  & $\{5,3\}$ & 15 &$(\mathbf{010212})^2$ \\ \hline

\hline
\cline{1-4} &  & 1& $(\mathbf{12})^n$ \\

    \cline{3-4} \raisebox{1.5ex}[0pt]{Hosohedron}  & \raisebox{1.5ex}[0pt]{$\{2,n\}$  \ $n$ odd} & $n$ & $\mathbf{0102}$
    \\\hline

    \cline{1-4} &  & $n/2$ & $(\mathbf{01})^2$ \\

     \cline{3-4} Hosohedron & $\{2,n\}$  \ $n$ even & $n/2$ & $(\mathbf{02})^2$ \\

      \cline{3-4} &  &  1 & $(\mathbf{12})^n$ \\
\hline

   \hline
\cline{1-4} &  & 1& $(\mathbf{01})^n$ \\

    \cline{3-4} \raisebox{1.5ex}[0pt]{Dihedron}  & \raisebox{1.5ex}[0pt]{$\{n,2\}$  \ $n$ odd} & $n$ & $\mathbf{0212}$
    \\\hline

    \cline{1-4} &  & 1& $(\mathbf{01})^n$ \\

     \cline{3-4} Dihedron & $\{n,2\}$  \ $n$ even & $n/2$ & $(\mathbf{02})^2$ \\

      \cline{3-4} &  &  $n/2$ & $(\mathbf{12})^2$ \\
\hline

\end{tabular}
\end{center}
\end{table}

\section{Universal maps}
\setcounter{equation}{0}

\subsection{Patterns of universal maps}

We now find the patterns for the universal maps $\hat{\mathcal M}(m,n)$.
The patterns for the universal maps depend just on the parity of $m,n$.
We suppose that our universal maps are infinite, that is, for now
we ignore the spherical maps.

We give these patterns in the following result.

\begin{theorem}
\label{theorempattern}
The patterns for the nonspherical universal maps are given in
\hbox{Table~\ref{patternuniversal}}.
\end{theorem}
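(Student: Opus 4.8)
The plan is to work directly inside the universal tessellation $\hat{\mathcal M}(m,n)$ of the appropriate simply connected surface $\mathcal U$ (Euclidean plane when $\{m,n\}$ is $\{3,6\},\{6,3\},\{4,4\}$, hyperbolic plane otherwise), using the fact that every mirror of a reflection is a full geodesic line in $\mathcal U$ built up out of sides of the $(2,m,n)$-triangles of the barycentric subdivision. Since the extended triangle group $\Gamma(2,m,n)$ acts transitively on these triangles and contains all three reflections $P,Q,R$ in the sides of a fixed triangle $T$, every mirror is $\Gamma(2,m,n)$-equivalent to an axis of one of $P$, $Q$, $R$; so it suffices to determine the sequence of corner-labels $\mathbf 0,\mathbf 1,\mathbf 2$ encountered along the axes of $P$, $Q$ and $R$. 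First I would fix the labelling convention of Figure~\ref{Figuretriangle}: the corner opposite the $\mathbf{01}$-side is $\mathbf 2$ (face-centre, angle $\pi/n$ wait—) — more precisely, the $\mathbf 0$-corner has angle $\pi/n$, the $\mathbf 2$-corner has angle $\pi/m$, and the $\mathbf 1$-corner has angle $\pi/2$; the reflection $R$ (say) fixes the $\mathbf{02}$-side, $P$ fixes the $\mathbf{12}$-side, $Q$ fixes the $\mathbf{01}$-side.

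The key combinatorial step is the following local rule. Walk along a mirror line $\ell$; it passes alternately through triangles lying on the two sides of $\ell$, and consecutive geometric points on $\ell$ are the two endpoints of a side of a triangle that lies on $\ell$. When $\ell$ passes through a geometric point $v$ of type $\mathbf i$, the angle of $\mathcal U$ around $v$ is $2\pi$ and is partitioned into triangle-corners at $v$; the mirror enters $v$ along one triangle-side and leaves along another, and the two triangle-sides through $v$ that lie on $\ell$ are related by a rotation about $v$ through $\pi$ (the half-turn that is the relevant conjugate of $PQ$, $QR$ or $RP$). Counting how many triangle-corners of angle $\pi/2$, $\pi/m$, $\pi/n$ fit into that angle $\pi$ tells us which side of the triangle we leave along, hence the type of the \emph{next} geometric point. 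Concretely: at a $\mathbf 2$-corner (angle $\pi/m$) the mirror turns through $m/2$ corners if $m$ is even and through $m$ corners (passing once around) if $m$ is odd, which is exactly why the answer depends only on the parities of $m$ and $n$; similarly at a $\mathbf 0$-corner with $n$, while a $\mathbf 1$-corner (angle $\pi/2$, always even) behaves uniformly. Carrying this rule around, starting from each of the three axes, produces a finite "word in the parities" that closes up; this is a short finite case analysis with four cases ($m,n$ each odd or even), and I would present it as a single figure-plus-table computation rather than three separate hyperbolic-geometry arguments.

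The main obstacle — and the part that needs genuine care rather than bookkeeping — is verifying that the \emph{link} (the fundamental repeating block $a_1\cdots a_k$) is correctly identified: a priori the mirror could close up after traversing the pattern only on a proper quotient, so in the universal (non-compact) setting the "pattern" is really a periodic bi-infinite sequence and one must check that the period we extract is the true minimal period and that in the universal map the link index is $\infty$ (the pattern does not terminate), whereas for quotients it becomes finite. I would handle this by noting that the axis of $P$, $Q$ or $R$ is invariant under an infinite cyclic group of translations along that axis (a hyperbolic or parabolic element of $\Gamma(2,m,n)$), that this translation is generated by a product of two reflections in perpendiculars to $\ell$ through consecutive $\mathbf 1$-points (or an analogous canonical pair), and that its translation length equals the length of exactly one link; minimality of the link then follows because no shorter subword is invariant under a smaller power. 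Finally I would match the resulting three patterns against the entries of Table~\ref{patternuniversal} case by case, and remark that in the spherical cases the same local rule applies but the axis closes up after finitely many links, recovering the $g=0$ rows of Table~\ref{patternspherical} as a consistency check.
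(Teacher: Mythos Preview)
Your approach is essentially the paper's: both arguments trace a mirror line through the barycentric triangulation, using the parity of $m$ at a face-centre and of $n$ at a vertex to decide whether the outgoing triangle-side has the same label as the incoming one or the opposite, and then read off the resulting periodic word in each of the four parity cases. The paper's version is more informal---it simply describes, case by case, how the line runs through faces and along edges, with a familiar Euclidean example as a guide---while you package the same observation as an explicit local transition rule; your extra paragraph about minimality of the link and infinitude of the index is sound but more than the paper bothers with.

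One slip to fix before matching against Table~\ref{patternuniversal}: from $(PQ)^2=(QR)^m=(RP)^n=1$ one sees that $P$ and $Q$ meet at the $\mathbf 1$-corner, $Q$ and $R$ at the $\mathbf 2$-corner, and $R$ and $P$ at the $\mathbf 0$-corner, so in the paper's convention $P$ fixes the $\mathbf{01}$-side and $Q$ the $\mathbf{12}$-side---the reverse of what you wrote---and your assignment of patterns to generators should be swapped accordingly.
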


\begin{table}[H]
\begin{center}
\caption{\label{patternuniversal}Patterns of Mirrors of universal maps}
%\smallskip
\begin{tabular}{|c|l|l|}
   \hline
Case & Reflections & Pattern \\
   \hline\hline
   \cline{1-3} $m$ and $n$ odd & $P$, $Q$, $R$ &$(\mathbf{010212})^\infty$
\\ \hline
\cline{1-3} & $P$& $(\mathbf{01})^\infty$ \\
     \cline{2-3}  \raisebox{1.5ex}[0pt]{$m$ odd $n$ even} &$Q$,
$R$&$(\mathbf{0212})^\infty$
     \\ \hline
      \cline{1-3}               &$P$& $(\mathbf{01})^\infty$ \\
\cline{2-3}  $m$ and $n$ even  & $Q$&$(\mathbf{12})^\infty$ \\
     \cline{2-3}                            &$R$&$(\mathbf{02})^\infty$
\\\hline
  \cline{1-3}   & $P$, $R$&$(\mathbf{0102})^\infty$\\
       \cline{2-3} \raisebox{1.5ex}[0pt]{$m$ even $n$ odd}&$Q$&
$(\mathbf{12})^\infty$ \\ \hline
\end{tabular}
\end{center}
\end{table}

\begin{proof}
\begin{enumerate}
\item $m$, $n$ even.  There are mirrors joining opposite vertices of a $n$-sided polygon that pass through face-centres. These correspond to the $R$-symmetries and so the pattern is $(\mathbf{02})^\infty.$ Similarly the $Q$-symmetries pass though edge-centres and face-centres giving their pattern to be $(\mathbf{12})^\infty$. The $P$-symmetries pass through edge-centres and vertices giving the pattern $(\mathbf{01})^\infty.$ (Take the square lattice as an example.)

\item $n$ even, $m$ odd. Now the $R$-symmetries are through lines joining opposite vertices that pass through a face-centre and then continue along an edge of the adjacent polygon, giving its pattern as $(\mathbf{0212})^\infty$. The pattern for the $Q$-symmetries is the same, but the $P$-symmetries just go through edge-centres and vertices and so the pattern is $(\mathbf{01})^\infty$. (An example here might be tessellation of the plane by equilateral triangles.)

\item $n$ odd, $m$ even. We can just take the dual of the above situation. This interchanges $\mathbf{0}$ and $\mathbf{2}$ and also $P$ and $Q$.  Hence the pattern for the $P$ and $R$ symmetries is $(\mathbf{0102})^\infty$ and the pattern for the $Q$-symmetries is $(\mathbf{12})^\infty.$
    (An example here might be the hexagonal lattice.)

\item $m$, $n$ odd. Now an edge of the map goes from a vertex to an edge-centre and then to a vertex and  (because $m$ is odd, to a face-centre) and then (because $n$ is odd) to the opposite edge-centre and then to a face-centre again giving the pattern $(\mathbf{010212})^\infty$. (As an example, consider the icosahedron though this is a finite map.)
\end{enumerate}
\end{proof}

It follows from the above discussions that the pattern of a mirror line is obtained from
one of the following links: $\mathbf{12}$, $\mathbf{02}$,
$\mathbf{01}$, $\mathbf{0102}$, $\mathbf{0212}$, $\mathbf{010212}$.

\subsection{Mirror automorphisms}

Let $M$ be a mirror of a map $\mathcal M$. A \emph{mirror
automorphism} of $M$ is an orientation-preserving automorphism of
the map $M$ that fixes $M$ and has no fixed points on $M$. Note
that the inverse of a mirror automorphism is also a mirror
automorphism. Suppose that $M$ contains vertices. (Similar
arguments apply if it contains edge-centres or face-centres.) If
$\alpha^{\prime}$ is a mirror automorphism of $M$ and if $v_1$ is
a vertex of $M$ then $\alpha^{\prime}v_1$ is another vertex of
$M$. Let $\alpha$ be a mirror automorphism that takes $v_1$ to the
closest vertex which lies on $M$. Then $\alpha$ generates the
cyclic group of all mirror automorphisms of $M$.

\subsection{Mirror automorphisms of universal maps}

We now determine the possible mirror automorphisms of $\hat{\mathcal M}(m,n)$ in
terms of the generators $P$,$Q$,$R$ of $\Gamma(2,m,n)$.  Let $M$ be a mirror with
pattern $(\mathbf{12})^\infty$.
As we saw earlier, this pattern occurs in the cases where $m$ is even.
Let $F_1$ and $F_2$ be two faces of $\hat{\mathcal M}(m,n)$ with a common edge
such that $M$ passes through the centre of the common edge and the centres
of $F_1$ and $F_2$. Let us divide $F_1$ and $F_2$ into $2m$ $(2,m,n)$-triangles
as in  \S 2. Let $T_1$ be one of these triangles contained in $F_1$ such that its
$\mathbf{12}$-side lies on $M$. Let $P$, $Q$ and $R$ be the reflections in the sides
of $T_1$. These reflections generate $Aut^\pm\hat{\mathcal M}(m,n)$
and satisfy (\ref{presentation2mn}). The automorphism
$(RQ)^{({\frac{m}{2}-1)}}R$ is a reflection of $\hat{\mathcal M}(m,n)$ that maps $T_1$ onto
another triangle, say $T_2$, in $F_1$ such that the
$\mathbf{12}$-side of $T_2$ lies on $M$, and $P$ maps $T_2$
onto another triangle, say $T_3$, in $F_2$. Therefore,
$(RQ)^{({\frac{m}{2}-1)}}RP$ maps $T_1$ to $T_3$ and hence $F_1$
to $F_2$. It is not difficult to see that
$(RQ)^{({\frac{m}{2}-1)}}RP$ fixes $M$ setwise and
cyclically permutes the links of the pattern of $M$. Therefore,
$(RQ)^{({\frac{m}{2}-1)}}RP$ is the mirror automorphism of $M$.
See Figure~\ref{Figuremirroraut}, where $n=3$ and $m=6$.

Similarly, we can determine the mirror automorphisms corresponding to the other
patterns and we display the results in Table~\ref{tablemirroraut}, both in terms of $P$,$Q$,$R$ and
$A$,$B$,$C$. In Table~\ref{tablemirroraut}, for each pattern we give only a link.

\begin{figure}[htb]
\centering
\includegraphics[width=0.6\textwidth]{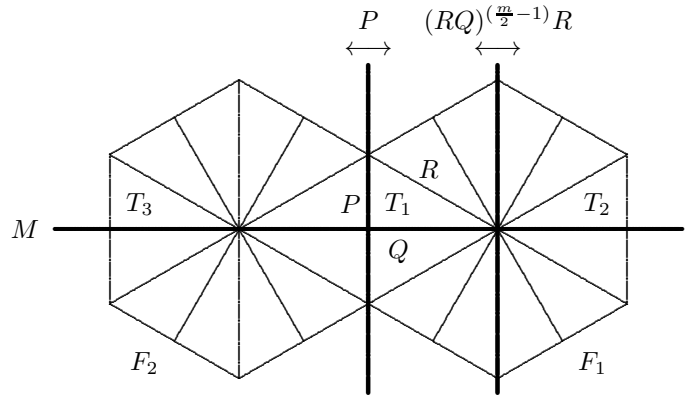}
\caption{Mirror automorphism corresponding to the pattern $(\mathbf{12})^\infty$}
\label{Figuremirroraut}
\end{figure}

\begin{remark}
\label{remarkconjugacy}
\emph{Note that in the above discussion the reflections $(RQ)^{({\frac{m}{2}-1)}}R$ and  $P$
have been chosen in order that their mirror lines are minimum distance apart
and orthogonal to $M$. So the product of these reflections is the mirror
automorphism of $M$.
If $M^{\prime}$ is another mirror line with the same pattern then there exists
an orientation preserving automorphism $f$ of $\hat{\mathcal M}(m,n)$ such that $f$ maps $M$ onto
$M^{\prime}$ since $Aut^\pm\hat{\mathcal M}(m,n)$ is transitive on $(2,m,n)$-triangles.
Therefore, if $\mu$ is the mirror automorphism of $M$, then $\mu^{\prime}=f\mu f^{-1}$
is the mirror automorphism of $M^{\prime}$. As a result, each pattern corresponds
to a conjugacy class of mirror automorphisms. This argument applies to all maps.}
\end{remark}

\begin{table}[H]
\begin{center}
\caption{\label{tablemirroraut}Patterns and Mirror Automorphisms}
\begin{tabular}{|c|l|l|l|}
  \hline
  Case & Link      & Mirror Automorphism  & Mirror Automorphism \\
  \hline\hline
     1 & $\mathbf{01}$     & $(RP)^{\frac{n}{2}-1}RQ$                      & $C^{\frac{n}{2}}A$  \\\hline
     2 & $\mathbf{02}$     & $(QR)^{\frac{m}{2}-1}Q(PR)^{\frac{n}{2}-1}P$  & $B^{\frac{m}{2}}C^{\frac{n}{2}}$  \\\hline
     3 & $\mathbf{12}$     & $(RQ)^{({\frac{m}{2}-1)}}RP$             & $B^{\frac{m}{2}}A$  \\\hline
     4 & $\mathbf{0102}$   & $(PR)^{\frac{n-1}{2}}Q(RP)^{\frac{n-1}{2}}(QR)^{({\frac{m}{2}}-1)}Q$            & $C^{\frac{n+1}{2}}BC^{\frac{n+1}{2}}B^{\frac{m}{2}}$  \\\hline
     5 & $\mathbf{0212}$   & $(PR)^{({\frac{n}{2}}-1)}P(QR)^{\frac{m-1}{2}}P(RQ)^{\frac{m-1}{2}}$           & $C^{\frac{n}{2}}B^{\frac{m+1}{2}}CB^{\frac{m+1}{2}}$  \\\hline
     6 & $\mathbf{010212}$ & $(QR)^{\frac{m-1}{2}}P(RQ)^{\frac{m-1}{2}}(PR)^{\frac{n-1}{2}}Q(RP)^{\frac{n-1}{2}}$ & $B^{\frac{m+1}{2}}CB^{\frac{m+1}{2}}  C^{\frac{n+1}{2}}BC^{\frac{n+1}{2}}$  \\\hline
 \end{tabular}
\end{center}
\end{table}

\bigskip

The following result is useful in determining the link index.

\begin{lemma}
\label{lemmalinkindex}
 Let $T$ be a symmetry of a Riemann surface with mirror $M$.  Associated to $M$
 we have a pattern $\pi$ with link index $K$ and a mirror automorphism $S_M$.
 Then the order of $S_M$ is equal to $K$.
 \end{lemma}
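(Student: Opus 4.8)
The plan is to think of the mirror $M$ as a circle carrying the periodic sequence of geometric points described in~(\ref{pattern}), and to identify the group of mirror automorphisms with the cyclic group of rotations of that circle that preserve this decorated structure. First I would fix the pattern $\pi = (a_1\cdots a_k)^K$ on $M$, so that $M$ contains exactly $kK$ geometric points, cyclically labelled by the periodic word. By Remark~\ref{remarkconjugacy} and the discussion preceding it, any mirror automorphism is orientation-preserving, fixes $M$ setwise, and has no fixed points on $M$; restricted to $M$ it is therefore a nontrivial rotation of the circle. Crucially, a mirror automorphism is an automorphism of the map, so it must permute the geometric points of $\mathcal M$ lying on $M$ and preserve their types ($\mathbf{0}$s to $\mathbf{0}$s, etc.) and their cyclic order. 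Hence its restriction to $M$ is a cyclic shift of the labelled sequence by some number of positions, and the shift must be by a multiple of $k$ (otherwise the periodic word $a_1\cdots a_k$ would fail to be carried to itself — this uses that $a_1\cdots a_k$ is a \emph{link}, i.e. a minimal period, so it cannot be shifted by a proper sub-amount).

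The second step is to pin down the generator. As set up in \S3.2, the mirror automorphism $S_M$ is the one taking a chosen geometric point of a given type to the \emph{nearest} geometric point of that type along $M$; equivalently, by the construction in \S3.3 (product of the two reflections whose axes are orthogonal to $M$ and a minimal distance apart), $S_M$ shifts the decorated sequence by exactly one link, i.e. by $k$ positions. Therefore $S_M$ restricted to $M$ is the rotation of the circle of $kK$ points by $k$ positions, which has order exactly $K$.

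For the third step I must upgrade ``$S_M|_M$ has order $K$'' to ``$S_M$ has order $K$'' as an automorphism of the whole surface. One direction is immediate: if $S_M^r = \mathrm{id}$ on $X$ then certainly $S_M^r = \mathrm{id}$ on $M$, so $K \mid r$, giving $K \le \mathrm{ord}(S_M)$. For the reverse, I would argue that an orientation-preserving automorphism of a regular map that fixes $M$ pointwise-enough — specifically, one that fixes $M$ setwise and acts trivially on the sequence of geometric points of $M$ — must be the identity. The cleanest way is via the triangle-group picture: $S_M^K$ fixes $M$ setwise and fixes each geometric point on $M$; pick a $(2,m,n)$-triangle $T$ with a side on $M$. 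Then $S_M^K$ fixes the two endpoints of that side (both geometric points) and, being orientation-preserving and mapping $M$ to $M$, it must fix $T$ itself. Since $Aut^\pm\mathcal M$ acts freely (simply transitively) on the $(2,m,n)$-triangles, $S_M^K = \mathrm{id}$. Hence $\mathrm{ord}(S_M) = K$.

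The main obstacle I expect is the last step, and within it the claim that $S_M^K$ actually fixes a flag/triangle rather than merely permuting the geometric points correctly: I need that an orientation-preserving map automorphism fixing two adjacent geometric points on the axis $M$ and preserving $M$ is forced to fix the incident triangle. This follows because such an automorphism fixes the geodesic segment of $M$ between those two points (an orientation-preserving isometry of $M \cong S^1$ or a geodesic fixing two points is the identity on the arc between them in the relevant local picture), and then preserves the local cyclic order of triangles around that arc; combined with orientation-preservation this leaves only the identity flag-action, and freeness of $Aut^\pm\mathcal M$ on flags finishes it. A secondary subtlety worth a sentence is confirming that $S_M$ indeed shifts by one \emph{full} link and not, say, by a ``half-link'' in the symmetric patterns like $(\mathbf{010212})^N$ — but this is exactly what the explicit generators in Table~\ref{tablemirroraut} realize, and one checks directly that those elements move a geometric point to the nearest one of its type, i.e. advance the decorated sequence by $k$.
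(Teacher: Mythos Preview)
Your proposal is correct and follows the same idea as the paper: both identify $S_M$ as the shift-by-one-link on the circle $M$ and conclude that its order equals the number $K$ of links. The paper's version is a terse vertex count (writing $M=\ell\cup S_M(\ell)\cup\cdots\cup S_M^{h-1}(\ell)$ and equating $K\lambda=h\lambda$, where $\lambda$ is the number of vertices in a link and $h=\mathrm{ord}(S_M)$) and leaves your third step implicit; your flag-freeness argument for why $S_M^K=\mathrm{id}$ on all of $X$ (not just on $M$) is a valid way to justify what the paper takes for granted.
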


\begin{proof}
We suppose that $M$ passes through vertices,
that is points labelled $\mathbf{0}$. If the link $\ell$ contains $\lambda>0$
vertices then the mirror contains $K\lambda$ vertices. Now if $\ell$ is a
link and $S_M$ has order $h$, then the pattern must be $\ell\cup S_M(\ell)\cup S_M^2(\ell)\dots\cup S_M^{h-1}(\ell)$ which contains $h\lambda$ vertices. Thus, $K\lambda=h\lambda$ so that $K=h$. If the mirror passes through face-centres or edge-centres, then the proof is the same.
\end{proof}

\section{Patterns of toroidal maps}
\setcounter{equation}{0}

It is known that a regular map of genus one is either of type
$\{4,4\}$, $\{3,6\}$ or $\{6,3\}$. To describe these maps we follow \cite{CM} and \cite{JS}.

\subsection{Maps of type $\{4,4\}$}

Consider the $(2,4,4)$-triangle $T$ in $\mathbb{C}$ whose vertices are $0$, $\frac{1}{2}$ and $\frac{1}{2}+\frac{1}{2}\mathrm{i}$. Then the reflections $P(z)=\bar{z}$, $Q(z)=-\bar{z}+1$ and $R(z)=\mathrm{i}\bar{z}$ in the sides of $T$ satisfy the relations
$$P^2=Q^2=R^2=(PQ)^2=(QR)^4=(RP)^4=1$$
and hence generate a group isomorphic to the extended triangle group $\Gamma(2,4,4)$.
Now $\Gamma(2,4,4)$ has a normal subgroup $\Lambda$ generated by the unit translations
$z\mapsto z+1$ and $z\mapsto z+\mathrm{i}$. The square with vertices $0$, $1$, $\mathrm{i}$
and $1+\mathrm{i}$ is a fundamental region for $\Lambda$ and $\Lambda$ can be
identified with the ring $\mathbb{Z}[\mathrm{i}]$ of Gaussian integers.
Now $\Lambda$ has a subgroup $\Omega$ generated by the perpendicular
translations $f(z)=z+b+c\mathrm{i}$ and $g(z)=z-c+b\mathrm{i}$, where $b$ and $c$ are integers.
Also, $\mathbb{C}/\Omega$ is a torus and $\Omega$ is an ideal in $\mathbb{Z}[\mathrm{i}]$ since $\mathrm{i}(b+c\mathrm{i})=-c+b\mathrm{i}$. In fact, $\Omega$ is the principal ideal $(b+c\mathrm{i})$.
The square with vertices $0$, $b+c\mathrm{i}$, $-c+b\mathrm{i}$ and $b-c+(b+c)\mathrm{i}$ is a
fundamental region for $\Omega$ and has area $n=b^2+c^2$. So the index of $\Omega$ in
$\Lambda$ is $n$. On the torus $\mathbb{C}/\Omega$ there is a map of type $\{4,4\}$ having
$n$ vertices $2n$ edges and $n$ faces and it is denoted by $\{4,4\}_{b,c}$ in \cite{CM}.
In this paper we deal with reflexible regular maps and by \cite{CM} the map $\{4,4\}_{b,c}$
is reflexible if and only if $bc(b-c)=0$. So we have two types of reflexible maps, namely
$\{4,4\}_{b,0}$ and $\{4,4\}_{b,b}$.

Now from \cite{JS} a regular map of type $\{4,4\}_{b,0}$ has the form $\mathbb{Z}[\mathrm{i}]/(b)$.
It follows from Section 3 that we have three types of patterns, namely $(\mathbf{01})^{K_1}$, $(\mathbf{12})^{K_2}$ and $(\mathbf{02})^{K_3}$, where $K_1$, $K_2$ and $K_3$ are
positive integers. The mirror automorphisms corresponding to these patterns are $BC^{-1}(z)=z+1$, $B^{-1}C(z)=z+\mathrm{i}$ and $BAC(z)=z+1+\mathrm{i}$, respectively, where $A$, $B$, $C$ are defined after (\ref{presentation2mn}).
It is thus clear that in all these cases the order of the mirror automorphism is $b$.
Also, from \cite{JS} regular maps of type $\{4,4\}_{b,b}$ have the form $\mathbb{Z}[\mathrm{i}]/(b+b\mathrm{i})$.
Now $b$ or $b\mathrm{i}$  is not in the ideal generated by $b+b\mathrm{i}$.
However $2b=(1-\mathrm{i})(b+b\mathrm{i})$ is in the ideal generated by
$b+b\mathrm{i}$, and so is $2b\mathrm{i}$. Thus the order of the mirror
automorphisms corresponding to the patterns $(\mathbf{01})^{K_1}$ and $(\mathbf{12})^{K_2}$ is $2b$.
For a pattern of type $(\mathbf{02})^{K_3}$ the mirror automorphism is $BAC(z)=z+1+\mathrm{i}$.
In this case $b(1+\mathrm{i})$ is in the ideal generated by $b(1+\mathrm{i})$ so that the order of
the mirror automorphism is $b$.  Consequently by Lemma~\ref{lemmalinkindex} we have

\begin{theorem}
Let $\mathcal{M}_1$ and $\mathcal{M}_2$ be the regular maps $\{4,4\}_{b,0}$
and $\{4,4\}_{b,b}$ on the square torus, respectively. Then the pattern of a
mirror is either $(\mathbf{01})^b$,
$(\mathbf{02})^b$ or $(\mathbf{12})^b$ with respect to $\mathcal{M}_1$, or
$(\mathbf{01})^{2b}$, $(\mathbf{02})^b$ or $(\mathbf{12})^{2b}$ with
respect to $\mathcal{M}_2$.
\end{theorem}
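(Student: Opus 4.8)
The plan is to combine the two ingredients that have just been established: first, Theorem~\ref{theorempattern} (together with the fact that $m=n=4$ is the even–even case) tells us that on any map of type $\{4,4\}$ the only possible links are $\mathbf{01}$, $\mathbf{02}$, $\mathbf{12}$, so every mirror has a pattern of the form $(\mathbf{01})^{K_1}$, $(\mathbf{02})^{K_2}$ or $(\mathbf{12})^{K_3}$; second, Lemma~\ref{lemmalinkindex}, which identifies each link index $K_i$ with the order of the corresponding mirror automorphism. So the proof reduces entirely to computing the orders of the three mirror automorphisms $BC^{-1}$, $B^{-1}C$ and $BAC$ inside the quotient group, for each of the two ideals $(b)$ and $(b+b\mathrm{i})$ in $\mathbb{Z}[\mathrm i]$.

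First I would recall, as in the preceding discussion, that for the standard $(2,4,4)$-triangle with reflections $P(z)=\bar z$, $Q(z)=-\bar z+1$, $R(z)=\mathrm i\bar z$, the orientation-preserving generators act as $A=PQ:z\mapsto z+1$ up to the rotational part — more precisely one reads off that the three mirror automorphisms are the translations $z\mapsto z+1$, $z\mapsto z+\mathrm i$ and $z\mapsto z+1+\mathrm i$ respectively (this is exactly the identification already made in the text, so I can quote it). A regular map of type $\{4,4\}_{b,0}$ is $\mathbb{C}/\Omega$ with $\Omega=(b)\subset\mathbb{Z}[\mathrm i]$, and in the quotient torus the order of the translation by $t\in\mathbb{Z}[\mathrm i]$ is the least positive integer $k$ with $kt\in(b)$. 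For $t=1$ and $t=\mathrm i$ this is clearly $k=b$ (since $\mathrm i$ is a unit, $k\mathrm i\in(b)\iff k\in(b)$); for $t=1+\mathrm i$ one checks $k(1+\mathrm i)\in(b)$ forces $b\mid k(1+\mathrm i)$, and since $1+\mathrm i$ is not a unit one must be a little careful — but because $b$ is a rational integer and $N(1+\mathrm i)=2$, the least such $k$ is still $b$ (if $b$ is odd, $\gcd(b,1+\mathrm i)=1$; if $b$ is even, write $b=2^a b'$ and verify $k=b$ is minimal by a norm count). Hence for $\mathcal M_1$ all three link indices equal $b$, giving patterns $(\mathbf{01})^b$, $(\mathbf{02})^b$, $(\mathbf{12})^b$.

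For $\mathcal M_2=\mathbb{C}/\Omega$ with $\Omega=(b+b\mathrm i)=b(1+\mathrm i)$, the same recipe applies: the link index of the $(\mathbf{01})$- or $(\mathbf{12})$-pattern is the least $k$ with $k\cdot 1\in b(1+\mathrm i)\mathbb{Z}[\mathrm i]$, equivalently the least $k$ with $\frac{k}{b(1+\mathrm i)}\in\mathbb{Z}[\mathrm i]$; since $\frac{1}{1+\mathrm i}=\frac{1-\mathrm i}{2}$, we need $b\mid \frac{k(1-\mathrm i)}{2}\cdot(\text{stuff})$ — cleanly, $k\in b(1+\mathrm i)\mathbb{Z}[\mathrm i]$ iff $2b\mid k$ (because $b(1+\mathrm i)\mid k$ with $k$ rational forces multiplying by the conjugate $b(1-\mathrm i)$, giving $2b^2\mid kb(1-\mathrm i)$, i.e.\ $2b\mid k(1-\mathrm i)$, and taking norms/real parts yields $2b\mid k$; conversely $2b=(1-\mathrm i)\cdot b(1+\mathrm i)\in\Omega$ as noted in the text). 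So $K_1=K_3=2b$. For the $(\mathbf{02})$-pattern the mirror automorphism is $z\mapsto z+1+\mathrm i$, and since $b(1+\mathrm i)\in\Omega$ trivially while no smaller multiple $k(1+\mathrm i)$ with $0<k<b$ lies in $b(1+\mathrm i)\mathbb{Z}[\mathrm i]$ (divide by $1+\mathrm i$: need $b\mid k$), we get $K_2=b$. This yields patterns $(\mathbf{01})^{2b}$, $(\mathbf{02})^b$, $(\mathbf{12})^{2b}$ for $\mathcal M_2$, completing the proof.

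The main obstacle is purely arithmetic: the norm bookkeeping in $\mathbb{Z}[\mathrm i]$ for the translation $z\mapsto z+1+\mathrm i$ in the $\{4,4\}_{b,0}$ case, and for $z\mapsto z+1$ in the $\{4,4\}_{b,b}$ case, where $1+\mathrm i$ being a non-unit prime of norm $2$ means one has to treat the parity of $b$ (or equivalently the power of $1+\mathrm i$ dividing $b$) with some care to confirm the claimed minimal $k$. Everything else is an immediate application of Theorem~\ref{theorempattern} and Lemma~\ref{lemmalinkindex}, plus the explicit identification of the mirror automorphisms as unit/Gaussian translations already carried out in the paragraph preceding the theorem.
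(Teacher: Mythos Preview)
Your approach is essentially identical to the paper's: reduce via Theorem~\ref{theorempattern} to the three links $\mathbf{01}$, $\mathbf{02}$, $\mathbf{12}$, identify the corresponding mirror automorphisms as the translations by $1$, $\mathrm{i}$, $1+\mathrm{i}$, and then compute their orders in $\mathbb{Z}[\mathrm i]/(b)$ and $\mathbb{Z}[\mathrm i]/(b+b\mathrm i)$ using Lemma~\ref{lemmalinkindex}. The paper in fact carries out exactly these steps in the paragraph immediately preceding the theorem, dispensing with the order computations for $\mathcal{M}_1$ by the phrase ``it is thus clear''; your version is if anything more careful, since you flag that the $1+\mathrm{i}$ translation in $\mathbb{Z}[\mathrm i]/(b)$ genuinely needs a short valuation check at the prime $(1+\mathrm i)$ when $b$ is even, and you spell out why the minimal $k$ with $k\in(b+b\mathrm i)$ is $2b$ rather than just asserting it.
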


We illustrate this theorem in Figures~\ref{Map4440} and \ref{Map4422}, where the bold lines
denote the mirrors with different patterns.

\begin{figure}[htb]
  \centering
  \includegraphics[width=0.35\textwidth]{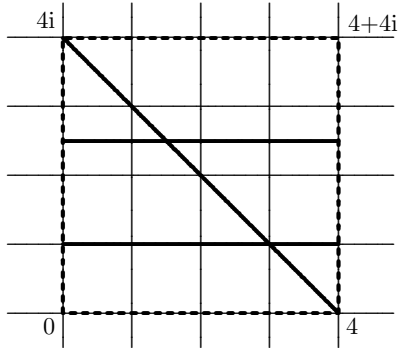}
  \caption{Map $\{4,4\}_{4,0}$}
  \label{Map4440}
\end{figure}

\begin{figure}[htb]
  \centering
  \includegraphics[width=0.35\textwidth]{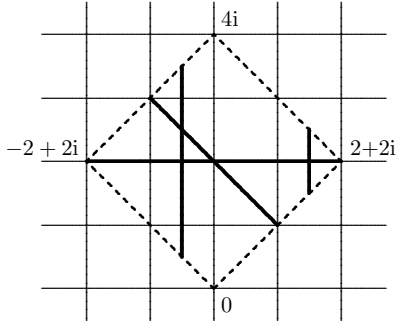}
  \caption{Map $\{4,4\}_{2,2}$}
  \label{Map4422}
\end{figure}

\subsection{Maps of types $\{3,6\}$ and $\{6,3\}$}

Now consider the $(2,3,6)$-triangle $T$ in $\mathbb{C}$ whose vertices are $0$,
$\frac{1}{2}$ and $\frac{1}{2}+\frac{\sqrt{3}}{6}\mathrm{i}$. Then the reflections
$P(z)=\bar{z}$, $Q(z)=-\bar{z}+1$ and $R(z)=a\bar{z}$,
where $a=\mathrm{e}^{\frac{\pi}{3}\mathrm{i}}$, in the sides of $T$ generate a group
isomorphic to the extended triangle group $\Gamma(2,3,6)$, which has a presentation
$$\langle P,Q,R\mid P^2=Q^2=R^2=(PQ)^2=(QR)^3=(RP)^6=1\rangle.$$
Then $\Gamma(2,3,6)$ has a normal subgroup $\Lambda$ generated by
the unit translations $z\mapsto z+1$ and $z\mapsto z+\omega$,
where $\omega=\mathrm{e}^{\frac{2\pi}{3}\mathrm{i}}$. The
parallelogram with vertices $0$, $1$, $\omega$ and $1+\omega$ is a
fundamental region for $\Lambda$ and we can think of $\Lambda$ as
the ring $\mathbb{Z}[\omega]$. The lattice $\Lambda$ has a
subgroup $\Omega$ generated by the translations $f(z)=z-c+b\omega$
and $g(z)=z+b+c+c\omega$, where $b$ and $c$ are integers. So
$\mathbb{C}/\Omega$ is a torus and $\Omega$ is an ideal in
$\mathbb{Z}[\omega]$ since $\omega(b+c+c\omega)=-c+b\omega$.  In
fact $\Omega$ is the principal ideal $(-c+b\omega)$. The
parallelogram with vertices $0$, $-c+b\omega$, $b+c+c\omega$ and
$b+(b+c)\omega$ is a fundamental region for $\Omega$. On the torus
$\mathbb{C}/\Omega$ we have a new map of type $\{3,6\}$ having $t$
vertices $3t$ edges and $2t$ triangular faces and it is denoted by
$\{3,6\}_{b,c}$ in \cite{CM}, where $t=b^2+bc+c^2$. As in the
previous case the map $\{3,6\}_{b,c}$ is reflexible if and only if
$bc(b-c)=0$ by \cite{CM}. So in this case we have the reflexible
regular maps $\{3,6\}_{b,0}$ and $\{3,6\}_{b,b}$.

Now a regular map of type $\{3,6\}_{b,0}$ has the form $\mathbb{Z}[\omega]/(b)$.
It follows from \hbox{Section 3} that in this case we have the patterns $(\mathbf{01})^{K_1}$ and $(\mathbf{0212})^{K_2}$ and the corresponding mirror automorphisms are $C^2A(z)=z-1$ and $C^3B^2CB^2(z)=z-2-\omega$, respectively.
It is thus clear that in both cases the order of the mirror automorphism is $b$.
Similarly, regular maps of type $\{3,6\}_{b,b}$ have the form $\mathbb{Z}[\omega]/(-b+b\omega)$.
Now $b$, $-b$, $2b$ and $-2b$ is not in the ideal generated by $-b+b\omega$. However, $-3b=(2+\omega)(-b+b\omega)$ is in the ideal generated by $-b+b\omega$ and so is $3b$.
Thus, the order of the mirror automorphism $C^2A$, which corresponds to the pattern
$(\mathbf{01})^{K_1}$, is $3b$. Since $b(-2-\omega)=(-b+b\omega)(1+\omega)$, $-2b-b\omega$ is in the ideal generated by $-b+b\omega$. So the mirror automorphism $C^3B^2CB^2$, which corresponds to the pattern
$(\mathbf{0212})^{K_2}$, has order $b$. As a result by Lemma~\ref{lemmalinkindex} we have

\begin{theorem}
Let $\mathcal{M}_1$ and $\mathcal{M}_2$ be the regular maps $\{3,6\}_{b,0}$ and $\{3,6\}_{b,b}$
on the rhombic torus, respectively. Then the pattern of a mirror is either $(\mathbf{01})^b$ or $(\mathbf{0212})^b$ with respect to $\mathcal{M}_1$, or $(\mathbf{01})^{3b}$ or $(\mathbf{0212})^b$ with respect to $\mathcal{M}_2$.
\end{theorem}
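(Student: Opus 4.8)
The plan is to reduce everything to Lemma~\ref{lemmalinkindex}, which says that each link index equals the order of the corresponding mirror automorphism, and then to compute these orders as orders of translations on the quotient torus. First I would observe that for type $\{3,6\}$ we have $m=3$ (odd) and $n=6$ (even), so Theorem~\ref{theorempattern} guarantees that the only patterns that can occur are $(\mathbf{01})^{K_1}$ (arising from the reflection $P$) and $(\mathbf{0212})^{K_2}$ (arising from $Q$ and $R$). Thus a mirror on either $\mathcal{M}_1$ or $\mathcal{M}_2$ has exactly one of these two patterns, and it remains only to pin down $K_1$ and $K_2$ in each of the two maps.

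Next I would read off from Table~\ref{tablemirroraut} the mirror automorphisms attached to these two links: for $\mathbf{01}$ it is $C^{n/2}A=C^{3}A$, and for $\mathbf{0212}$ it is $C^{n/2}B^{(m+1)/2}CB^{(m+1)/2}=C^{3}B^{2}CB^{2}$. Using the explicit reflections $P(z)=\bar z$, $Q(z)=-\bar z+1$, $R(z)=a\bar z$ with $a=\mathrm{e}^{\pi\mathrm{i}/3}$, one finds $A=PQ\colon z\mapsto 1-z$, $B=QR\colon z\mapsto\omega z+1$ and $C=RP\colon z\mapsto az$, whence $C^{3}\colon z\mapsto -z$. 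A short calculation then shows that both mirror automorphisms are \emph{translations} of $\mathbb{C}$, namely $C^{3}A\colon z\mapsto z-1$ and $C^{3}B^{2}CB^{2}\colon z\mapsto z-2-\omega$. These descend to translations by $-1$ and by $-2-\omega$ on the torus $\mathbb{C}/\Omega$, where $\Omega=(b)$ for $\mathcal{M}_1$ and $\Omega=(-b+b\omega)$ for $\mathcal{M}_2$.

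The heart of the argument is the remark that the order of a translation $z\mapsto z+v$ on $\mathbb{C}/\Omega$ is the least positive integer $h$ with $hv\in\Omega$, so by Lemma~\ref{lemmalinkindex} each $K_i$ is precisely this least $h$ for the relevant $v$ and $\Omega$. For $\mathcal{M}_1$, with $\Omega=b\mathbb{Z}[\omega]$, the least positive integer $h$ with $-h\in\Omega$ is $b$ (a rational integer lies in $b\mathbb{Z}[\omega]$ iff it lies in $b\mathbb{Z}$), and similarly the least $h$ with $h(-2-\omega)\in\Omega$ is $b$; this yields $(\mathbf{01})^{b}$ and $(\mathbf{0212})^{b}$. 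For $\mathcal{M}_2$, with $\Omega=(-b+b\omega)=(b(\omega-1))$, the $\mathbf{0212}$ link is again clean because $-2-\omega$ is an associate of $\omega-1$ (indeed $2+\omega=\omega^{2}(\omega-1)$), so $b(-2-\omega)=b(\omega-1)(-\omega^{2})\in\Omega$ while no smaller multiple works, giving order $b$.

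I expect the main obstacle to be the $\mathbf{01}$ link for $\mathcal{M}_2$, where $v=-1$ carries no factor of $\omega-1$ and one must clear the norm. Here I would show that the least positive rational integer in the ideal $(b(\omega-1))$ is $3b$, not $b$: writing a general element as $b(\omega-1)(x+y\omega)$ and using $\omega^{2}=-1-\omega$, its $\omega$-coefficient forces $x=2y$, leaving the real value $-3by$, so that $\pm b,\pm2b\notin\Omega$ whereas $-3b=(2+\omega)(-b+b\omega)\in\Omega$; this rests on $N(\omega-1)=3$. Consequently the order is $3b$, giving the pattern $(\mathbf{01})^{3b}$, and assembling the four computations proves the theorem.
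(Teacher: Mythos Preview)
Your proof is correct and follows essentially the same approach as the paper: identify the two possible link types via Theorem~\ref{theorempattern}, read off the mirror automorphisms from Table~\ref{tablemirroraut} as the translations $z\mapsto z-1$ and $z\mapsto z-2-\omega$, determine their orders on each quotient $\mathbb{Z}[\omega]/\Omega$ by ideal membership, and invoke Lemma~\ref{lemmalinkindex}. Your write-up is in fact slightly more explicit than the paper's in showing that the least positive rational integer in $(-b+b\omega)$ is $3b$.
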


See Figures~\ref{Map3640} and \ref{Map3622} for illustrations of this theorem.

The regular maps $\{6,3\}_{b,0}$ and $\{6,3\}_{b,b}$ are the duals of the maps $\{3,6\}_{b,0}$ and $\{3,6\}_{b,b}$, respectively. The patterns of the mirrors with respect to them can be obtained from
the above theorem by changing the $\mathbf{0}$s and $\mathbf{2}$s.

\begin{figure}[htb]
  \centering
  \includegraphics[width=0.5\textwidth]{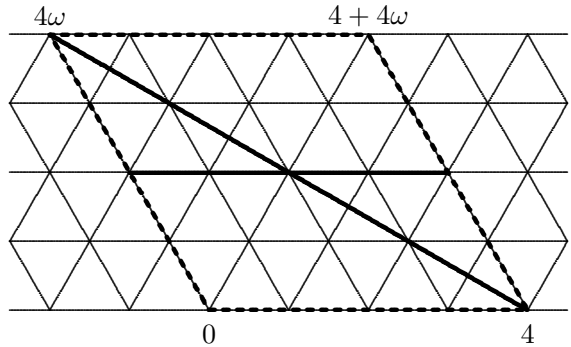}
  \caption{Map $\{3,6\}_{4,0}$}
  \label{Map3640}
\end{figure}

\begin{figure}[htb]
  \centering
  \includegraphics[width=0.6\textwidth]{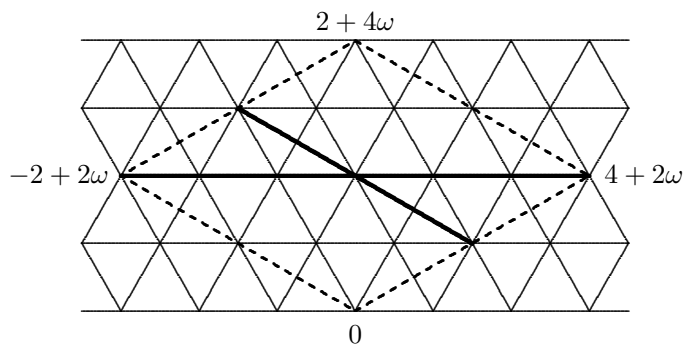}
  \caption{Map $\{3,6\}_{2,2}$}
  \label{Map3622}
\end{figure}

\section{Patterns of mirrors on surfaces of genus $g>1$}
\setcounter{equation}{0}

We now discuss patterns for regular maps on surfaces of genus $g>1$. The actual pattern
for a map of type $\{m,n\}$ is the same as for the universal map of type $\{m,n\}$ except that now the link indices
are finite. For example if $m$ and $n$ are odd our
pattern would be $(\mathbf{010212})^k$, for some integer $k$. Thus, for the icosahedron
of type $\{3,5\}$ we have $(\mathbf{010212})^2$ and for Klein's map of type $\{3,7\}$ we have
$(\mathbf{010212})^3$ as we have seen. A major problem is to compute these link indices and for this we use Lemma~\ref{lemmalinkindex}.

\subsection{Patterns of mirrors on surfaces of genus 2 and 3}

We now determine the patterns of mirrors on surfaces of genus 2 and 3.
For the presentations of the full automorphism groups of the corresponding
regular maps we use the list of \cite{CD}, where a presentation is given
for the full conformal automorphism groups of regular maps of genus 2 to 15.
Where we have a nice representation of the group we can find the order of the
mirror automorphism easily as illustrated in Example~\ref{exampleBolza}.
In other cases we could use MAGMA~\cite{MAGMA}.
We give the results in Tables~\ref{patterngenus2} and
\ref{patterngenus3}. Some of the maps in Tables~\ref{patterngenus2} and
\ref{patterngenus3} are particular examples of Accola-Maclachlan or Wiman maps which we deal with in \S 6.1 and 6.2.

\begin{example}
\label{exampleBolza} \emph{Let $X$ be the Riemann surface of genus 2 with the largest
automorphism group. In this case $X$ is the Bolza surface with 48 automorphisms.
It underlies a regular map $\mathcal{M}$ of type $\{3,8\}$
and we determine the patterns of the mirrors fixed by the reflections of $\mathcal{M}$.
Now $Aut\mathcal{M}$ is isomorphic to $\mathrm{GL}(2,3)$ and can be generated by
$
A=\left(
  \begin{array}{rr}
    1 & 1 \\
    0 & -1 \\
  \end{array}
\right)
$
 and
$
B=\left(
  \begin{array}{rr}
    0 & -1 \\
    1 & -1 \\
  \end{array}
\right).
$
 So we have
\[
C=B^{-1}A=\left(
  \begin{array}{rr}
    -1 & 1 \\
    -1 & 0 \\
  \end{array}
\right)
\left(
  \begin{array}{rr}
    1 & 1 \\
    0 & -1 \\
  \end{array}
\right)
=
\left(
  \begin{array}{rr}
    -1 & 1 \\
    -1 & -1 \\
  \end{array}
\right).
\]
Let $M$ be a mirror on $X$ fixed by a reflection of $\mathcal{M}$. It follows from
Section 3 that the pattern of $M$ is either $(\mathbf{0212})^{K_1}$ or $(\mathbf{01})^{K_2}$,
where $K_1$ and $K_2$ are positive integers. Let $M$ have pattern $(\mathbf{0212})^{K_1}$. From Table~\ref{tablemirroraut} it follows that the
mirror automorphism of $M$ is $C^{\frac{n}{2}}B^{\frac{m+1}{2}}CB^{\frac{m+1}{2}}$, where $m=3$ and $n=8$. So the mirror automorphism of $M$ is $C^4B^2CB^2$.
By using the above matrices we find that
\[
C^4B^2CB^2=\left(
  \begin{array}{rr}
    1 & 0 \\
    0 & -1 \\
  \end{array}
\right).
\]
This matrix has order 2 and hence by Lemma~\ref{lemmalinkindex}, $K_1=2$ and $M$ has pattern $(\mathbf{0212})^2$.
Now let $M$ has pattern $(\mathbf{01})^{K_2}$. Then by Table~\ref{tablemirroraut} the
mirror automorphism of $M$ is $C^{\frac{n}{2}}A$ and so the mirror automorphism of $M$
is $C^4A$. By using the above matrices we find that
\[
C^4A=\left(
  \begin{array}{rr}
    -1 & -1 \\
    0 & 1 \\
  \end{array}
\right),
\]
which has order 2. So by Lemma~\ref{lemmalinkindex}, $K_2=2$ and $M$ has pattern $(\mathbf{01})^2$.}
\end{example}

In some cases different maps may lie on the same Riemann surface.
For example the Bolza surface underlying the regular map of type $\{3,8\}$ also
underlies regular maps of types $\{4,8\}$ and $\{8,8\}$. This surface is denoted by $S1$ in Table~\ref{patterngenus2}, see \cite{Si2001a}, p63.

\begin{table}[H]
\begin{center}
\caption{\label{patterngenus2}Patterns of mirrors on surfaces of genus 2}
%\smallskip
\begin{tabular}{|c|c|c|c|c|c|}
   \hline
Map & Type & Surface  & $|Aut^\pm\mathcal M|$ &  Link & Link index \\
   \hline\hline

\cline{1-6}  &   &  &  &   $\mathbf{01}$ & 2\\
\cline{5-6}  \raisebox{1.5ex}[0pt]{M.2.1} &\raisebox{1.5ex}[0pt]{\{3,8\}}& \raisebox{1.5ex}[0pt]{S1} & \raisebox{1.5ex}[0pt]{96}  &  $\mathbf{0212}$ & 2\\
 \hline

\cline{1-6}    &  &   &  &    $\mathbf{01}$ & 4\\
\cline{5-6}  M.2.2 & \{4,6\}  & S2 & 48 &  $\mathbf{02}$ & 2\\
\cline{5-6}     &  &  &   &    $\mathbf{12}$ & 2\\
\hline

\cline{1-6}    &  &    & &    $\mathbf{01}$ & 2\\
\cline{5-6}  M.2.3 & \{4,8\}  & S1 & 32 &  $\mathbf{02}$ & 1\\
\cline{5-6}     &  &  &    &   $\mathbf{12}$ & 2\\
\hline

\cline{1-6}    &  &    & &    $\mathbf{01}$ & 2\\
\cline{5-6}  M.2.4 & \{6,6\}  & S2 & 24 &  $\mathbf{02}$ & 2\\
\cline{5-6}     &  &  &   &    $\mathbf{12}$ & 2\\
\hline

\cline{1-6}  &    & &  &   $\mathbf{01}$ & 1\\
\cline{5-6}  \raisebox{1.5ex}[0pt]{M.2.5} &\raisebox{1.5ex}[0pt]{\{5,10\}}& \raisebox{1.5ex}[0pt]{S3} & \raisebox{1.5ex}[0pt]{20}  &  $\mathbf{0212}$ & 1\\
 \hline

\cline{1-6}    &  &    & &    $\mathbf{01}$ & 1\\
\cline{5-6}  M.2.6 & \{8,8\}  & S1 & 16 &  $\mathbf{02}$ & 1\\
\cline{5-6}     &  &  &    &   $\mathbf{12}$ & 1\\
\hline

\end{tabular}
\end{center}
\end{table}

\begin{table}[H]
\begin{center}
\caption{\label{patterngenus3}Patterns of mirrors on surfaces of genus 3}
%\smallskip
\begin{tabular}{|c|c|c|c|c|c|}
   \hline
Map & Type & Surface  & $|Aut^\pm\mathcal M|$ &  Link & Link index \\
   \hline\hline

\cline{1-6}  M.3.1 & \{3,7\}  & S1 & 336 &  $\mathbf{010212}$ & 3\\

\cline{1-6}  &   &  &  &   $\mathbf{01}$ & 4\\
\cline{5-6}  \raisebox{1.5ex}[0pt]{M.3.2} &\raisebox{1.5ex}[0pt]{\{3,8\}}& \raisebox{1.5ex}[0pt]{S2} & \raisebox{1.5ex}[0pt]{192}  &  $\mathbf{0212}$ & 2\\
 \hline

\cline{1-6}  &   &  &  &   $\mathbf{01}$ & 2\\
\cline{5-6}  \raisebox{1.5ex}[0pt]{M.3.3} &\raisebox{1.5ex}[0pt]{\{3,12\}}& \raisebox{1.5ex}[0pt]{S3} & \raisebox{1.5ex}[0pt]{96}  &  $\mathbf{0212}$ & 2\\
 \hline

\cline{1-6}    &  &   &  &    $\mathbf{01}$ & 2\\
\cline{5-6}  M.3.4 & \{4,6\}  & S4 & 96 &  $\mathbf{02}$ & 2\\
\cline{5-6}     &  &  &   &    $\mathbf{12}$ & 4\\
\hline

\cline{1-6}    &  &    & &    $\mathbf{01}$ & 2\\
\cline{5-6}  M.3.5 & \{4,8\}  & S5 & 64 &  $\mathbf{02}$ & 2\\
\cline{5-6}     &  &  &    &   $\mathbf{12}$ & 2\\
\hline

\cline{1-6}    &  &    & &    $\mathbf{01}$ & 2\\
\cline{5-6}  M.3.6 & \{4,8\}  & S2 & 64 &  $\mathbf{02}$ & 2\\
\cline{5-6}     &  &  &    &   $\mathbf{12}$ & 4\\
\hline

\cline{1-6}    &  &    & &    $\mathbf{01}$ & 2\\
\cline{5-6}  M.3.7 & \{4,12\}  & S6 & 48 &  $\mathbf{02}$ & 1\\
\cline{5-6}     &  &  &    &   $\mathbf{12}$ & 2\\
\hline

\cline{1-6}    &  &    & &    $\mathbf{01}$ & 2\\
\cline{5-6}  M.3.8 & \{6,6\}  & S4 & 48 &  $\mathbf{02}$ & 1\\
\cline{5-6}     &  &  &    &   $\mathbf{12}$ & 2\\
\hline

\cline{1-6}    &  &    & &    $\mathbf{01}$ & 2\\
\cline{5-6}  M.3.9 & \{8,8\}  & S2 & 32 &  $\mathbf{02}$ & 1\\
\cline{5-6}     &  &  &    &   $\mathbf{12}$ & 2\\
\hline

\cline{1-6}    &  &    & &    $\mathbf{01}$ & 2\\
\cline{5-6}  M.3.10 & \{8,8\}  & S5 & 32 &  $\mathbf{02}$ & 1\\
\cline{5-6}     &  &  &    &   $\mathbf{12}$ & 2\\
\hline

\cline{1-6}  &    & &  &   $\mathbf{01}$ & 1\\
\cline{5-6}  \raisebox{1.5ex}[0pt]{M.3.11} &\raisebox{1.5ex}[0pt]{\{7,14\}}& \raisebox{1.5ex}[0pt]{S7} & \raisebox{1.5ex}[0pt]{28}  &  $\mathbf{0212}$ & 1\\
 \hline

\cline{1-6}    &  &    & &    $\mathbf{01}$ & 1\\
\cline{5-6}  M.3.12 & \{12,12\}  & S6 & 24 &  $\mathbf{02}$ & 1\\
\cline{5-6}     &  &  &    &   $\mathbf{12}$ & 1\\
\hline
\end{tabular}
\end{center}
\end{table}

\section{Patterns of mirrors on some well-known Riemann surfaces}
\setcounter{equation}{0}

\subsection{Accola-Maclachlan surfaces}

Let $\mu(g)$ be the maximum number of conformal
automorphisms of all Riemann surfaces of genus $g>1$. It was shown
independently by Accola \cite{Acc} and Maclachlan \cite{Macl} that
$\mu(g)\geq 8(g+1)$ and for every $g\geq 2$ there is a Riemann
surface of genus $g$ with $8(g+1)$ conformal automorphisms. These
surfaces are known as \emph{Accola-Maclachlan surfaces}. The Accola-Maclachlan
surface of genus $g$ is the two-sheeted covering of the sphere branched over
the vertices of a regular $(2g+2)$-gon.  Let
$X=\mathbb{H}/K$ be the Accola-Maclachlan surface of genus
$g>1$. It is known that $K$ is normal in the Fuchsian
triangle group $\Gamma[2,4,2g+2]$ and $AutX$ is isomorphic to
$\Gamma[2,4,2g+2]/K$ and has order $8(g+1)$. So $X$ underlies a
regular map $\mathcal{M}$ of type $\{2g+2,4\}$ such that
$Aut\mathcal{M}$ is isomorphic to $AutX$ and has a
presentation
\begin{equation}
\label{presentationAcMac+} \langle A,B,C\mid
A^2=B^{2g+2}=C^4=ABC=(C^{-1}B)^2=1\rangle
\end{equation}
This map is called the \emph{Accola-Maclachlan map}. The following
presentation of $Aut^\pm\mathcal{M}$ can be obtained from
(\ref{presentationAcMac+}):
\begin{equation}
\label{presentationAcMacFull} \langle P,Q,R\mid
P^2=Q^2=R^2=(PQ)^2=(QR)^{2g+2}=(RP)^4=(PRQR)^2=1\rangle.
\end{equation}
See \cite{Acc,Macl,Si2014} for details.

\begin{proposition}
\label{propositionAM} Let $\mathcal{M}$ be the map of type $\{2g+2,4\}$ on the the Accola-Maclachlan surface $X$ of genus $g>1$. Then the patterns of the mirrors of the reflections of $\mathcal{M}$
are either $(\mathbf{01})^2$, $(\mathbf{02})^2$ and $(\mathbf{12})^2$  if $g$ is odd or $(\mathbf{01})^2$, $(\mathbf{02})^2$ and $(\mathbf{12})^4$ if $g$ is even.
\end{proposition}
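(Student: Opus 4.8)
The plan is to mimic the Bolza example (Example~\ref{exampleBolza}) and the toroidal computations: the map $\mathcal{M}$ is of type $\{2g+2,4\}$, so $m=2g+2$ and $n=4$ are both even, and by Theorem~\ref{theorempattern} (case ``$m$ and $n$ even'') the only possible links are $\mathbf{01}$, $\mathbf{02}$, $\mathbf{12}$, with mirror automorphisms read off from rows~1,~2,~3 of Table~\ref{tablemirroraut}. Since $n=4$ we have $n/2=2$, and since $m=2g+2$ we have $m/2=g+1$; substituting into the table, the three mirror automorphisms are $CA$ (for $\mathbf{01}$), $B^{g+1}C^2$ (for $\mathbf{02}$), and $B^{g+1}A$ (for $\mathbf{12}$). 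By Lemma~\ref{lemmalinkindex} the link index of each pattern equals the order of the corresponding element in $Aut\mathcal{M}=\Gamma[2,4,2g+2]/K$, so the whole proposition reduces to computing three element orders in the group with presentation~(\ref{presentationAcMac+}).

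First I would exploit the extra relation. From $C^4=1$ and $(C^{-1}B)^2=1$ we get $C^{-1}BC^{-1}B=1$, i.e. $BC^{-1}B=C$, equivalently $B C^{-1} = C B^{-1}$ and $C^{-1} B C^{-1}=B^{-1}$; these identities let one move $B$'s past $C$'s at the cost of inversion. Using $ABC=1$, so $A=C^{-1}B^{-1}$, I would rewrite each mirror automorphism purely in terms of $B$ and $C$: for $\mathbf{01}$, $CA=CC^{-1}B^{-1}=B^{-1}$, whose order is $2g+2$ in general but — crucially — whose order \emph{on the mirror} is governed by Lemma~\ref{lemmalinkindex} via the link; here I expect $CA=B^{-1}$ to actually square to the identity once one also uses the defining relation of $K$, or more carefully the relevant computation should give order $2$. (This is the kind of place where a clean matrix or permutation model, or MAGMA as the authors suggest, settles it instantly.) Similarly $\mathbf{12}$ gives $B^{g+1}A=B^{g+1}C^{-1}B^{-1}=B^{g+1}\cdot B C^{-1}\cdot \text{(correction)}$ after applying $C^{-1}B^{-1}=B C^{-1} \cdot C^{-?}$ — the point is to collapse it using $BC^{-1}B=C$ and $B^{2g+2}=1$; I anticipate that the parity of $g$ enters exactly here, because $B^{g+1}$ is the unique central-ish involution among powers of $B$ when $g$ is odd but behaves differently (order-$4$ phenomenon) when $g$ is even, matching the stated answer $(\mathbf{12})^2$ for $g$ odd and $(\mathbf{12})^4$ for $g$ even. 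For $\mathbf{02}$, $B^{g+1}C^2$: since $C^2$ has order $2$ and commutes appropriately with $B^{g+1}$ (check using $BC^{-1}B=C$, hence $B C^2 B^{-1}=B C\cdot C B^{-1} = B C B^{-1} \cdot B C B^{-1}$, and $BCB^{-1}=(BC^{-1}B)^{-1}B\cdot\dots$), one should find $(B^{g+1}C^2)^2=1$, giving link index $2$ uniformly.

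The three computations above are the entire content, so the main obstacle is purely the group-theoretic bookkeeping: correctly deriving, from the single relation $(C^{-1}B)^2=1$ together with $B^{2g+2}=C^4=ABC=1$, that $CA$ and $B^{g+1}C^2$ have order $2$ for all $g$, while $B^{g+1}A$ has order $2$ precisely when $g$ is odd and order $4$ when $g$ is even. I would organise this by first establishing the commutation/inversion lemmas $C^{-1}BC^{-1}=B^{-1}$ and $C B C = B^{-1}$ (so $C$ inverts... no: so conjugation by $C$ sends $B$ to something computable), then noting that $\langle B\rangle$ is normalised in a controlled way so that the relevant elements all lie in a dihedral-type subgroup $\langle B, C^2\rangle$ or $\langle B\rangle \rtimes \langle C\rangle$ where orders are transparent. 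If the symbolic manipulation gets unwieldy I would instead invoke the explicit model of $Aut\mathcal{M}$ as the semidirect product $\mathbb{Z}_{2g+2}\rtimes \mathbb{Z}_4$ (the standard description of the Accola–Maclachlan group, cf.\ \cite{Acc,Macl,Si2014}) and just read off the three orders there; in either route Lemma~\ref{lemmalinkindex} then immediately converts these orders into the claimed link indices and the proposition follows. As a sanity check, for $g=2$ this must reproduce line M.2.2 — wait, M.2.2 is $\{4,6\}$, not $\{2g+2,4\}=\{6,4\}$, which is its dual — so the dual check is that interchanging $\mathbf{0}\leftrightarrow\mathbf{2}$ in M.2.2's data $(\mathbf{01})^4,(\mathbf{02})^2,(\mathbf{12})^2$ gives $(\mathbf{21})^4,(\mathbf{20})^2,(\mathbf{10})^2$, i.e.\ $(\mathbf{12})^4,(\mathbf{02})^2,(\mathbf{01})^2$, consistent with ``$g$ even'' in the proposition; and for $g=3$, type $\{8,4\}$, the pattern should come out $(\mathbf{01})^2,(\mathbf{02})^2,(\mathbf{12})^2$, consistent with ``$g$ odd''.
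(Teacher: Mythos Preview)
Your overall strategy is exactly the paper's: use Theorem~\ref{theorempattern} to reduce to the three links $\mathbf{01},\mathbf{02},\mathbf{12}$, read the mirror automorphisms off Table~\ref{tablemirroraut}, and compute their orders in the group~(\ref{presentationAcMac+}) via Lemma~\ref{lemmalinkindex}. The sanity checks against Tables~\ref{patterngenus2} and~\ref{patterngenus3} are also correct.

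However, there is a concrete slip that derails the $\mathbf{01}$ case. You correctly note $n/2=2$, but then substitute $CA$ rather than $C^{n/2}A=C^{2}A$ from row~1 of Table~\ref{tablemirroraut}. With $A=C^{-1}B^{-1}$ this gives you $CA=B^{-1}$, which genuinely has order $2g+2$; no hidden relation in $K$ will make $B^{-1}$ an involution, so the paragraph where you hope it ``should actually square to the identity'' cannot be rescued. The correct element is
\[
C^{2}A \;=\; C^{2}\cdot C^{-1}B^{-1}\;=\;CB^{-1},
\]
and now the extra relation does the work immediately: $(C^{-1}B)^{2}=1$ is equivalent to $(CB^{-1})^{2}=1$, so $C^{2}A$ has order~$2$ and the link index is~$2$, as claimed. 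Once this is fixed, there is no need to fall back on a semidirect-product model or MAGMA for this pattern.

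For the other two cases your outline is on the right track but the paper's execution is cleaner than the commutation-juggling you sketch. For $\mathbf{02}$ the paper uses two structural facts rather than identity-chasing: $C^{2}$ is the hyperelliptic involution, hence central, so $(B^{g+1}C^{2})^{2}=B^{2g+2}C^{4}=1$; and a homomorphism onto the dihedral group $D_{2g+2}$ (sending $B\mapsto q$, $C\mapsto p$) shows $B^{g+1}C^{2}\neq 1$. For $\mathbf{12}$ the key computation is that the relation $(C^{-1}B)^{2}=1$ yields $C^{-1}B^{g}C^{-1}=B^{-g}$ when $g$ is odd and $C^{-1}B^{g}C^{-1}=B^{-g}C^{2}$ when $g$ is even, whence $(B^{g+1}A)^{2}$ is trivial or equals $C^{2}$ according to parity, giving orders $2$ and $4$ respectively. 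This is exactly the parity split you anticipated, made precise.
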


\begin{proof}
Let $M$ be a mirror of a reflection of $\mathcal{M}$. Then by
Theorem~\ref{theorempattern}, the pattern of $M$ is either
$(\mathbf{01})^{K_1}$, $(\mathbf{02})^{K_2}$ or
$(\mathbf{12})^{K_3}$, where ${K_1}$, ${K_2}$ and ${K_3}$ are
positive integers. Let $M$ have pattern $(\mathbf{01})^{K_1}$. Then by
Table~\ref{tablemirroraut}, the
mirror automorphism of $M$ is $C^\frac{n}{2}A$. Here $n=4$ and so the mirror
automorphism of $M$ is $C^2A$. It follows from
(\ref{presentationAcMac+}) that $C^2A=CB^{-1}$ and this element has order 2.
Thus, by Lemma~\ref{lemmalinkindex}, $M$ has pattern $(\mathbf{01})^2$.

Let $M$ have pattern $(\mathbf{02})^{K_2}$. It follows from
Table~\ref{tablemirroraut} that the
mirror automorphism of $M$ is $B^\frac{m}{2}C^\frac{n}{2}$,
where $m=2g+2$ and $n=4$. So the mirror automorphism of $M$ is $B^{g+1}C^2$.
It is known that $X$ is hyperelliptic and $C^2$ is the hyperelliptic involution, which is
central in $AutX$. Now $B^{g+1}C^2$ cannot be the identity. To see this let
$D_{2g+2}=\langle p,q\mid p^2=q^{2g+2}=(pq)^2=1\rangle$ be the dihedral group of order $4g+4$.
Then there is a homomorphism $\theta\colon AutX\rightarrow D_{2g+2}$ defined by
$\theta(B)=q$, $\theta(C)=p$. If $B^{g+1}C^2$ is the identity,
then $q^{g+1}p^2$ is the identity. So $q^{g+1}$ is the identity, which is false in $D_{2g+2}$.
As $C^2$ is central, $B^{g+1}C^2$ has order two and by
Lemma~\ref{lemmalinkindex}, $M$ has pattern $(\mathbf{02})^2$.

Now let $M$ have pattern $(\mathbf{12})^{K_3}$. According to
Table~\ref{tablemirroraut}, the mirror automorphism of $M$ is $B^\frac{m}{2}A$,
where $m=2g+2$. So $B^{g+1}A$ is the mirror automorphism of $M$ and
it cannot be the identity. Otherwise, $A=B^{g+1}$ and $AutX$ is cyclic, which is false.
Now by using (\ref{presentationAcMac+}) it can be shown that for every $g>1$, $C^{-1}B^gC^{-1}$
is equal to $B^{-g}$ if $g$ is odd and $B^{-g}C^2$ if $g$ is even. So
$(B^{g+1}A)^2=B^gC^{-1}B^gC^{-1}$ is the identity if $g$ is odd and $C^2$ if $g$ is even.
It follows that the mirror automorphism $B^{g+1}A$ has
order 2 if $g$ is odd and 4 if $g$ is even. Thus, by Lemma~\ref{lemmalinkindex}, $M$ has pattern $(\mathbf{12})^2$ if $g$ is odd and $(\mathbf{12})^4$ if $g$ is even.
\end{proof}

\subsection{Wiman surfaces}

According to a classical theorem of Wiman \cite{Wim}, the largest
possible order of an automorphism of a Riemann surface of genus
$g>1$ is $4g+2$ and the second largest possible order is $4g$.  According to \cite{Kul1},
there is a unique Riemann surface of genus $g$ admitting an automorphism of order $4g+2$.
Also, see Harvey \cite{Harv}. This is called the \emph{Wiman surface of type I} and it has an
underlying regular map $\mathcal{M}_1$ of type $\{4g+2,2g+1\}$ called the \emph{Wiman map of type I}.
Also for $g\not=3$ there is a unique surface of genus $g$ admitting an automorphism of
order $4g$, called the \emph{Wiman surface of type II} and it has an underlying regular map
$\mathcal{M}_2$ of type $\{4g,4\}$  called the \emph{Wiman map of type II}. This is because,
due to Harvey's work these come from epimorphisms from $\Gamma[2,2g+1, 4g+2]$ to $C_{4g+2}$
and from $\Gamma[2,4,4g]$ to $Aut\mathcal{M}_2$ respectively. (For $g=3$, there is
another Riemann surface admitting $12=4g$ automorphisms and this
comes from the epimorphism from $\Gamma[3,4,12]$ to $C_{12}$.)

For every $g>2$, the group $Aut\mathcal{M}_2$
has order $8g$ and has a presentation
\begin{equation}
\label{presentationWimanII} \langle A,B,C \mid A^2=B^{4g}=C^4=ABC=C^2B^{2g}=1\rangle,
\end{equation}
which can be deduced from \cite{Kul2}. The following presentation of $Aut^\pm\mathcal{M}_2$
is obtained from (\ref{presentationWimanII}):
\begin{equation}
\label{presentationWimanIIfull} \langle P,Q,R\mid
P^2=Q^2=R^2=(PQ)^2=(QR)^{4g}=(RP)^4=(RP)^2(QR)^{2g}=1\rangle.
\end{equation}
If $g=2$, then the kernel of the homomorphism from $\Gamma[2,8,8]$ onto $C_8$ can be shown to be normal  in $\Gamma[2,3,8]$ and so for genus 2 the Wiman surface of type II coincides with the Bolza surface of Example~\ref{exampleBolza}.

It is known that the groups $Aut\mathcal{M}_1$ and
$Aut^\pm\mathcal{M}_1$ are isomorphic to $C_{4g+2}$ and
$D_{4g+2}$, respectively.

\begin{proposition}
\label{propositionWimanI}Let $\mathcal{M}_1$ be the Wiman map of type I of genus $g>1$.
Then the pattern of a mirror is either $(\mathbf{12})^1$ or $(\mathbf{0102})^1$.
\end{proposition}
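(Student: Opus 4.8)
The plan is to use Lemma~\ref{lemmalinkindex}: compute the order of the mirror automorphism associated to each possible pattern, working inside the group $Aut^\pm\mathcal{M}_1\cong D_{4g+2}$. Since $\mathcal{M}_1$ has type $\{4g+2,2g+1\}$, we have $m=4g+2$ (even) and $n=2g+1$ (odd), so by Theorem~\ref{theorempattern} (the case ``$m$ even, $n$ odd'') the only patterns that can occur are $(\mathbf{12})^{K_1}$ and $(\mathbf{0102})^{K_2}$, with corresponding mirror automorphisms (from Table~\ref{tablemirroraut}) $B^{\frac{m}{2}}A = B^{2g+1}A$ for the $\mathbf{12}$-link, and $C^{\frac{n+1}{2}}BC^{\frac{n+1}{2}}B^{\frac{m}{2}} = C^{g+1}BC^{g+1}B^{2g+1}$ for the $\mathbf{0102}$-link. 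It remains to show each of these elements has order $1$, i.e. is the identity, in $Aut\mathcal{M}_1$.

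First I would set up an explicit description of $Aut\mathcal{M}_1\cong D_{4g+2} = \langle \sigma,\tau \mid \sigma^{4g+2}=\tau^2=(\sigma\tau)^2=1\rangle$, realized as a quotient of $\Gamma[2,2g+1,4g+2]$: here $B$ (of order $4g+2$, the face-rotation) maps to the rotation $\sigma$, $C$ (of order $2g+1$, the vertex-rotation) maps to $\sigma^{-2}$ (since an element of order $2g+1$ in $D_{4g+2}$ is a rotation, and one checks it is $\sigma^{\pm 2}$ after adjusting the generator), and $A=(BC)^{-1}$ maps to a reflection. One has to be a little careful to identify exactly which powers, but the constraint $ABC=1$ forces $A = C^{-1}B^{-1} = \sigma^{2}\sigma^{-1}=\sigma$... no: I would instead take $B\mapsto\sigma$ and deduce $C$ from $C^{-1}=BA$ together with $C^{2g+1}=1$; the upshot is that $A$ is a reflection, say $A\mapsto\tau\sigma^{j}$ for an appropriate $j$.

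With this at hand the two computations are short. For the $\mathbf{12}$-link: $B^{2g+1}A$ is the product of the central involution $\sigma^{2g+1}$ (the unique element of order $2$ in the cyclic part) with a reflection, hence is again a reflection — but a mirror automorphism must be orientation-preserving and fixed-point-free on the surface, and in $D_{4g+2}$ the only orientation-preserving elements are the rotations $\sigma^k$; more directly, $B^{2g+1}A$ must lie in $Aut\mathcal{M}_1 = C_{4g+2}$, and one checks from the relations $ABC=1$, $C^{2g+1}=1$ that $B^{2g+1}A$ is forced to be trivial (equivalently $A=B^{-(2g+1)}=B^{2g+1}$ would make the group cyclic if it were nontrivial of the wrong type; the honest check is the substitution $A\mapsto$ reflection, $B\mapsto\sigma$, giving $\sigma^{2g+1}\cdot(\text{reflection}) = \text{reflection}$, which cannot be a nontrivial rotation, so by Lemma~\ref{lemmalinkindex} forces $K_1=1$). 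Similarly, for the $\mathbf{0102}$-link one substitutes $C\mapsto\sigma^{-2}$, $B\mapsto\sigma$ and computes $\sigma^{-2(g+1)}\cdot\sigma\cdot\sigma^{-2(g+1)}\cdot\sigma^{2g+1} = \sigma^{-4g-4+1+2g+1+1}=\sigma^{-2g-1}=\sigma^{2g+1}$, which is the central involution, not the identity — so I should instead verify that $\mathbf{0102}$ actually occurs with link index $1$ by checking the mirror automorphism is trivial; the correct resolution is that $C^{g+1}BC^{g+1}B^{2g+1}$ simplifies, using $C^{2g+1}=1$ and the dihedral relations, to the identity, which is a one-line calculation once the images of $B$ and $C$ are pinned down correctly.

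The main obstacle is purely bookkeeping: getting the identification of the abstract generators $A,B,C$ with explicit elements of $D_{4g+2}$ exactly right (in particular the precise power of $\sigma$ to which $C$ maps, and whether $A$ is $\tau$ or $\tau\sigma^j$), since an off-by-a-sign error there changes whether a mirror automorphism comes out as the identity or as the central involution. Once the homomorphism $\Gamma[2,2g+1,4g+2]\twoheadrightarrow D_{4g+2}$ is written down unambiguously — which is forced by the order constraints on $A,B,C$ together with $ABC=1$ — both order computations reduce to arithmetic of exponents modulo $4g+2$, and Lemma~\ref{lemmalinkindex} immediately yields that both link indices equal $1$, giving the patterns $(\mathbf{12})^1$ and $(\mathbf{0102})^1$ as claimed.
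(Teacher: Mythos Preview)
Your approach---computing the orders of the mirror automorphisms via Lemma~\ref{lemmalinkindex}---is viable, but the paper does something far simpler. Its entire proof is a counting argument: the Wiman map $\mathcal{M}_1$ of type $\{4g+2,2g+1\}$ has $|Aut\mathcal{M}_1|=4g+2$ darts, hence exactly one face and two vertices. Since both admissible links $\mathbf{12}$ and $\mathbf{0102}$ contain a face-centre $\mathbf{2}$, and a mirror is a simple closed curve that can meet the unique face-centre at most once, the link index is forced to be $1$ in each case. No group-element computations are needed.

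Your computational route would also succeed, but the execution contains a genuine error that is more than bookkeeping. You repeatedly treat $A$ as a reflection in $D_{4g+2}$; it is not. By definition $A=PQ$, $B=QR$, $C=RP$ are all products of two reflections and hence lie in the \emph{orientation-preserving} group $Aut\mathcal{M}_1\cong C_{4g+2}$, which is the rotation subgroup of $D_{4g+2}$. Thus $A$, being the unique involution in a cyclic group of order $4g+2$, must map to $\sigma^{2g+1}$. Taking $B\mapsto\sigma$, the relation $ABC=1$ then forces $C\mapsto\sigma^{-(2g+2)}=\sigma^{2g}$ (not $\sigma^{-2}$). With these correct images the two mirror automorphisms become
\[
B^{2g+1}A\;\longmapsto\;\sigma^{2g+1}\sigma^{2g+1}=\sigma^{4g+2}=1,
\qquad
C^{g+1}BC^{g+1}B^{2g+1}\;\longmapsto\;\sigma^{(2g+2)(2g+1)}=1,
\]
since $(2g+2)(2g+1)$ is a multiple of $4g+2$. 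So both have order $1$, and Lemma~\ref{lemmalinkindex} gives the link indices. Your mistaken assignment $A\mapsto\tau\sigma^j$ is precisely what produced the spurious central involution in your $\mathbf{0102}$ computation; it is not an ``off-by-a-sign'' issue but a misidentification of which subgroup $A,B,C$ live in.
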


\begin{proof}
It is known that $\mathcal{M}_1$ has type $\{4g+2,2g+1\}$ and it has one
face and two vertices. So the pattern of a mirror have to be either
$(\mathbf{12})^1$ or $(\mathbf{0102})^1$ with respect to $\mathcal{M}_1$.
\end{proof}

\begin{proposition}
\label{propositionWimanII}Let $\mathcal{M}_2$ be the Wiman map of type II
of genus $g>2$. Then the pattern
of a mirror is either $(\mathbf{01})^2$, $(\mathbf{12})^2$ or $(\mathbf{02})^1$.
\end{proposition}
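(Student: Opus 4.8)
The plan is to follow exactly the template of Proposition~\ref{propositionAM} and Example~\ref{exampleBolza}: use Theorem~\ref{theorempattern} to see that, since the Wiman map $\mathcal{M}_2$ has type $\{4g,4\}$ with $m=4g$ and $n=4$ both even, the only possible patterns are $(\mathbf{01})^{K_1}$, $(\mathbf{02})^{K_2}$ and $(\mathbf{12})^{K_3}$. By Lemma~\ref{lemmalinkindex} it then suffices to compute the order of the mirror automorphism attached to each link in Table~\ref{tablemirroraut}, working inside the group with presentation (\ref{presentationWimanII}), namely $\langle A,B,C \mid A^2=B^{4g}=C^4=ABC=C^2B^{2g}=1\rangle$.

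First I would treat the $\mathbf{01}$-link. From Table~\ref{tablemirroraut} the mirror automorphism is $C^{n/2}A=C^2A$. Using the relation $C^2=B^{-2g}$ (from $C^2B^{2g}=1$) and $A=(BC)^{-1}=C^{-1}B^{-1}$, rewrite $C^2A=B^{-2g}C^{-1}B^{-1}$ and show it squares to the identity but is not itself the identity (if it were, $C^2A=1$ would force $A=C^{-2}=C^2$, making $Aut\mathcal{M}_2$ a quotient in which $A$ is a power of $C$, hence too small — one can use a dihedral quotient as in Proposition~\ref{propositionAM} to rule this out cleanly). This gives $K_1=2$. Next, the $\mathbf{02}$-link has mirror automorphism $B^{m/2}C^{n/2}=B^{2g}C^2$. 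But $C^2B^{2g}=1$ is a relation, so $B^{2g}C^2=1$ as well (the two commute since $B^{2g}=C^{-2}$ is a power of $C$); hence the mirror automorphism is trivial, so $K_2=1$ and the pattern is $(\mathbf{02})^1$. Finally, the $\mathbf{12}$-link has mirror automorphism $B^{m/2}A=B^{2g}A$. Using $B^{2g}=C^2$ this equals $C^2A$, which is exactly the $\mathbf{01}$ mirror automorphism already shown to have order $2$; alternatively compute $(B^{2g}A)^2$ directly from the presentation. So $K_3=2$, giving $(\mathbf{12})^2$.

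The arithmetic inside the finitely presented group $Aut\mathcal{M}_2$ is the only place where care is needed: one must manipulate the relation $C^2B^{2g}=1$ together with $ABC=1$ to bring each candidate mirror automorphism into a normal form, and then certify both that its square is trivial and that it is nontrivial. The nontriviality half is the real obstacle, since a naive relation-chase only proves one direction; the clean fix, exactly as in Proposition~\ref{propositionAM}, is to exhibit an explicit quotient of $Aut\mathcal{M}_2$ (a dihedral group $D_{2g}$ or $D_{4g}$, via $B\mapsto q$, $C\mapsto p$) in which the image of the mirror automorphism is visibly of order $2$, forcing the order in $Aut\mathcal{M}_2$ to be at least $2$; combined with the upper bound from squaring, the order is exactly $2$. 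For the $g=2$ case one appeals to the remark in the text that the Wiman surface of type II then coincides with the Bolza surface, where Example~\ref{exampleBolza} already records patterns $(\mathbf{0212})^2$ and $(\mathbf{01})^2$ for the $\{3,8\}$ map — but here we are using the $\{8,8\}$ map on that surface (row M.2.6 of Table~\ref{patterngenus2}), whose patterns $(\mathbf{01})^1$, $(\mathbf{02})^1$, $(\mathbf{12})^1$ are consistent with the statement once one notes the hypothesis $g>2$ in the proposition, so no separate argument is needed. Assembling the three computations and invoking Lemma~\ref{lemmalinkindex} completes the proof.
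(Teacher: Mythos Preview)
Your proposal is correct and follows exactly the approach the paper intends: the paper omits the proof, saying only that it ``is much the same as that of Proposition~\ref{propositionAM}'', and your argument is precisely that template applied to the presentation~(\ref{presentationWimanII}). Your key simplifying observation---that the extra relation $C^2B^{2g}=1$ (together with $C^4=B^{4g}=1$) forces $B^{2g}=C^2$, so the $\mathbf{02}$ mirror automorphism $B^{2g}C^2$ is trivial and the $\mathbf{12}$ and $\mathbf{01}$ mirror automorphisms coincide as $C^2A$---is exactly what makes this case cleaner than the Accola--Maclachlan one; the nontriviality of $C^2A$ can be seen directly (if $C^2A=1$ then $A=C^2$ and $ABC=1$ gives $B=C$, contradicting $|B|=4g>4$), so the dihedral-quotient device, while valid, is not even needed here.
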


The proof is much the same as that of Proposition~\ref{propositionAM} so we omit it.

\subsection{Fermat curves}
\setcounter{equation}{0}

The Fermat curve $\mathbf{F}_n$ is the Riemann surface of the
projective algebraic curve
\[
\{(x,y,z)\mid x^n+y^n+z^n=0\},
\]
in $\mathbf{P}^2(\mathbb{C})$.
Let $\Gamma[n,n,n]$ be the Fuchsian triangle group with the presentation
\[
\langle x,y,z\mid x^n+y^n+z^n=xyz=1\rangle.
\]
If we abelianize this group, we get $\mathbb{Z}_n\oplus\mathbb{Z}_n$ so
that the commutator subgroup $K_n$ of $\Gamma[n,n,n]$ is a characteristic subgroup of index $n^2$.
So there is a homomorphism $\theta\colon \Gamma[n,n,n]\to \mathbb{Z}_n\oplus\mathbb{Z}_n$
whose kernel is $K_n$ and $\mathbb{H}/K_n\cong \mathbf{F}_n$, \cite{JS2}. Now
$\Gamma[n,n,n]\lhd \Gamma[2,3,2n]$ (\cite{Si1972}) with $\Gamma[2,3,2n]/\Gamma[n,n,n]\cong S_3$ (think of $S_3=\Gamma[2,3,2]$). We can extend the homomorphism $\theta$ to
$\theta^*\colon \Gamma[2,3,2n]\to (\mathbb{Z}_n\oplus\mathbb{Z}_n)\rtimes S_3$ so the kernel of
$\theta^*$ is also $K_n$. Thus, $K_n$ is normal in $\Gamma[2,3,2n]$ with index $6n^2$ and hence there is a regular map of type $\{3,2n\}$ on $\mathbf{F}_n$. We then find from the Riemann-Hurwitz formula that
$\mathbf{F}_n$ has genus $(n-1)(n-2)/2$. As $K_n$ is characteristic in
$\Gamma[n,n,n]$ and as $\Gamma[n,n,n]$ is normal in $\Gamma[2,3,2n]$,
$K_n$ is normal in $\Gamma[2,3,2n]$ and $Aut\mathbf{F}_n\cong \Gamma[2,3,2n]/K_n\cong (\mathbb{Z}_n\oplus\mathbb{Z}_n)\rtimes S_3$.

Consider the following matrices
$
A=\left(
  \begin{array}{lll}
    0            & \lambda & 0 \\
    \lambda^{-1} & 0       & 0 \\
    0            & 0       & 1 \\
  \end{array}
\right),
$
$
B=\left(
  \begin{array}{lll}
    0            & \lambda & 0 \\
    0            & 0       & 1 \\
    1            & 0       & 0 \\
  \end{array}
\right)
$
and
$
C=\left(
  \begin{array}{lll}
    0             & 0  & 1 \\
    0             & 1  & 0 \\
    \lambda^{-1}  & 0  & 0 \\
  \end{array}
\right) $ $\in \mathrm{PGL}(3,\mathbb{C})$, where
$\lambda=\mathrm{e}^{\frac{2\pi}{n}\mathrm{i}}$. Clearly, $A$, $B$
and $C$ are automorphisms of the Fermat curve $\mathbf{F}_n$. Also
$A^2=B^3=C^{2n}=ABC=1$. The matrices $C^2$ and $AC^2A$ are of
order $n$ and we can see that they commute. So they generate the
$\mathbb{Z}_n\oplus\mathbb{Z}_n$ part of $Aut\mathbf{F}_n$. As
$A^2=B^3=1$, the group generated by $A,B,C$ has order $6n^2$, so
$A,B,C$ generate $Aut\mathbf{F}_n$.

We now determine the patterns of the mirrors on $\mathbf{F}_n$ with respect to $\mathcal{M}_n$.
It follows from Section 3 that we have patterns $(\mathbf{0212})^{K_1}$ and
$(\mathbf{01})^{K_2}$ and the corresponding mirror automorphisms are
$S_1=C^nB^2CB^2$ and $S_2=C^nA$, respectively. We find that if $n$ is odd, then
$C^n=\left(
  \begin{array}{lll}
    0                      & 0  & \lambda^\frac{-n+1}{2} \\
    0                      & 1  & 0 \\
   \lambda^\frac{-n-1}{2}  & 0  & 0 \\
  \end{array}
\right)$
and if $n$ is even, then
$C^n=\left(
  \begin{array}{lll}
    \lambda^{-\frac{n}{2}} & 0  & 0 \\
    0                      & 1  & 0 \\
    0                      & 0  & \lambda^{-\frac{n}{2}} \\
  \end{array}
\right)$.
Hence if $n$ is odd, then
$S_1=C^nB^2CB^2=\left(
  \begin{array}{lll}
    \lambda^\frac{-n+3}{2}  & 0         & 0 \\
    0                       & \lambda   & 0 \\
    0                       & 0         & \lambda^\frac{-n+1}{2} \\
  \end{array}
\right)$
and if $n$ is even, then
$S_1=\left(
  \begin{array}{lll}
    0                        & 0        & \lambda^{-\frac{n}{2}+1} \\
    0                        & \lambda  & 0 \\
   \lambda^{-\frac{n}{2}+1}  & 0        & 0 \\
  \end{array}
\right)$.

Let $n$ be even. Then we find that
$S_1^2=\left(
  \begin{array}{lll}
    \lambda^2 & 0          & 0 \\
    0         & \lambda^2  & 0 \\
    0         & 0          & \lambda^2 \\
  \end{array}
\right)$,
which is the identity. So $S_1$ has order 2 and by
Lemma~\ref{lemmalinkindex}, we have pattern $(\mathbf{0212})^2$.

Let now $n$ be odd. Then in this case we can see that $S_1$ has order $n$. Therefore, by
Lemma~\ref{lemmalinkindex}, we have pattern $(\mathbf{0212})^n$.

Let us determine the order of $S_2=C^nA$. Now we
find that if $n$ is odd, then
$S_2=\left(
  \begin{array}{lll}
    0             & 0                         & \lambda^{\frac{-n+1}{2}} \\
    \lambda^{-1}  & 0                         & 0 \\
    0             & \lambda^{\frac{-n-1}{2}}  & 0 \\
  \end{array}
\right)$
and if $n$ is even, then
$S_2=\left(
  \begin{array}{lll}
    0             & \lambda^{-\frac{n}{2}+1}  & 0 \\
    \lambda^{-1}  & 0                         & 0 \\
    0             & 0                         & \lambda^{-\frac{n}{2}} \\
  \end{array}
\right)$.

We can see that $S_2$ has order 3 if $n$ is odd and 4 if $n$ is even. Thus, by
Lemma~\ref{lemmalinkindex}, we have pattern $(\mathbf{01})^3$ if $n$ is odd and
$(\mathbf{01})^4$ if $n$ is even.
As a result we have

\begin{theorem}
\label{theoremH1}
Let $\mathbf{F}_n$ be the Fermat curve of degree $n$ and consider the map $\mathcal{M}_n$ of type $\{3,2n\}$ on $\mathbf{F}_n$. Then:
\begin{enumerate}
 \item[$(i)$] If $n$ if odd, then the patterns of the mirrors on $\mathbf{F}_n$ have one of the forms
 $(\mathbf{01})^3$ and $(\mathbf{0212})^n$;
  \item[$(ii)$] If $n$ if even, then the patterns of the mirrors on $\mathbf{F}_n$ have one of the forms
 $(\mathbf{01})^4$ and $(\mathbf{0212})^2$.
\end{enumerate}
\end{theorem}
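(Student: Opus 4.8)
The plan is to assemble Theorem~\ref{theoremH1} from three facts already in hand: the list of possible patterns for a map of type $\{m,n\}$ coming from Theorem~\ref{theorempattern}, the explicit mirror automorphisms tabulated in Table~\ref{tablemirroraut}, and the identification (Lemma~\ref{lemmalinkindex}) of the link index with the order of the corresponding mirror automorphism. Since $\mathcal{M}_n$ has type $\{3,2n\}$, the face size $3$ is odd and the vertex valency $2n$ is even, so we are in the case ``$m$ odd, $n$ even'' of Theorem~\ref{theorempattern}; as noted at the start of \S5 the same patterns persist, now with finite link indices. Hence every mirror of a reflection of $\mathcal{M}_n$ has pattern $(\mathbf{01})^{K_2}$ (the $P$-type mirrors) or $(\mathbf{0212})^{K_1}$ (the $Q$- and $R$-type mirrors, which share a pattern), and by transitivity of $Aut^\pm\mathcal{M}_n$ on $(2,3,2n)$-triangles together with Remark~\ref{remarkconjugacy} these two patterns exhaust all mirrors up to conjugacy.

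First I would read off from Table~\ref{tablemirroraut}, with $m=3$ and vertex valency $2n$, that the mirror automorphism of a $(\mathbf{0212})$-mirror is $S_1=C^nB^2CB^2$ and that of a $(\mathbf{01})$-mirror is $S_2=C^nA$. By Lemma~\ref{lemmalinkindex} one then has $K_1=\mathrm{ord}(S_1)$ and $K_2=\mathrm{ord}(S_2)$, so everything reduces to computing these two orders inside the matrix group $Aut\mathbf{F}_n\subset\mathrm{PGL}(3,\mathbb{C})$ using the explicit generators $A,B,C$. The key preliminary is a closed form for $C^n$, which behaves differently according to the parity of $n$: for $n$ odd it is a ``swap the outer coordinates'' matrix with nonzero entries $\lambda^{(-n\pm1)/2}$, and for $n$ even it is the diagonal matrix $\mathrm{diag}(\lambda^{-n/2},1,\lambda^{-n/2})$. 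Substituting these into $S_1$ and $S_2$ and multiplying out, one finds that for $n$ even $S_1$ is (a scalar times) an anti-diagonal involution, so $\mathrm{ord}(S_1)=2$, while for $n$ odd $S_1$ becomes diagonal with entries whose pairwise ratios are primitive $n$th roots of unity, so $\mathrm{ord}(S_1)=n$; similarly $S_2$ comes out with order $3$ for $n$ odd and order $4$ for $n$ even. Feeding these four numbers back through Lemma~\ref{lemmalinkindex} gives exactly the patterns $(\mathbf{01})^3,(\mathbf{0212})^n$ for $n$ odd and $(\mathbf{01})^4,(\mathbf{0212})^2$ for $n$ even, which is the assertion.

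The only real subtlety --- the rest being routine matrix arithmetic --- is that the computations take place in $\mathrm{PGL}(3,\mathbb{C})$, not $\mathrm{GL}(3,\mathbb{C})$, so a power of a matrix counts as the identity exactly when it is a nonzero scalar multiple of $I$. This is what allows $S_1^2=\lambda^2 I$ to register as order $2$ in the even case, and it is the point that must be watched when confirming that in the odd case $\mathrm{ord}(S_1)$ is exactly $n$ rather than a proper divisor, i.e.\ that no smaller power of the diagonal matrix $S_1$ is scalar; since $\lambda$ is a primitive $n$th root of unity and $\gcd((n-1)/2,n)=1$ for odd $n$, this is immediate. Once the projective convention is applied consistently throughout, the displayed matrix identities yield the stated orders and hence the theorem.
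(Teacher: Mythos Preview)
Your proposal is correct and follows essentially the same route as the paper: identify the two possible patterns from the ``$m$ odd, $n$ even'' case of Theorem~\ref{theorempattern}, read off the mirror automorphisms $S_1=C^nB^2CB^2$ and $S_2=C^nA$ from Table~\ref{tablemirroraut}, and compute their orders in $\mathrm{PGL}(3,\mathbb{C})$ via the explicit matrix generators, splitting on the parity of $n$. Your added remarks---that one must work projectively (so $S_1^2=\lambda^2 I$ counts as the identity) and that $\gcd((n-1)/2,n)=1$ forces the order of $S_1$ to be exactly $n$ in the odd case---make explicit two points the paper leaves implicit, but the argument is otherwise the same.
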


If we consider the dual map of $\mathcal{M}_n$, then we get the patterns above with $\mathbf{0}$s and $\mathbf{2}$s interchanged.

\subsection{Hurwitz surfaces}

Let $g>1$ be a positive integer and $\mu(g)$ be the maximum number
of conformal automorphisms of all Riemann surfaces of genus $g$.
Then it is known that $\mu(g)\leq 84(g-1)$ and this upper bound is
attained for infinitely many $g>1$, see \cite{Macb1}.  If
$X=\mathbb{H}/K$ is a Riemann surface of genus $g>1$ with
$|Aut X|= 84(g-1)$, then it is called a \emph{Hurwitz surface}
and $Aut X$ is called a \emph{Hurwitz group}. Here $Aut X\cong
\Gamma/K$, where $\Gamma$ is the Fuchsian triangle group
$\Gamma[2,3,7]$. Therefore, $X$ underlies a regular map
$\mathcal{M}$ of type $\{3,7\}$, which is called a \emph{Hurwitz
map}. It is also known that $Aut\mathcal{M}$ is isomorphic to
$AutX$ and has a presentation of the form
\begin{eqnarray*}
\langle A,B,C\mid A^2=B^3=C^7=ABC=\dots=1\rangle.
\label{presentationHurwitz}
\end{eqnarray*}
Let $\mathcal{M}$ be the corresponding Hurwitz map and let $S$ be the
mirror automorphism of $\mathcal{M}$. So $S=B^2CB^2C^4BC^4$
(see Table~\ref{tablemirroraut}). If $S$ has order 1, then
\[
AutX=\langle A,B,C\mid A^2=B^3=C^7=ABC=B^2CB^2C^4BC^4=1\rangle.
\]
Now by using MAGMA, we find that $AutX$ is the trivial group.
This shows that a Hurwitz map cannot have a link index equal to one.
Similarly,
\[
\langle A,B,C\mid A^2=B^3=C^7=ABC=S^2=1\rangle
\]
is a presentation for a group of order 504. So the genus must be 7
and $X$ is Macbeath's surface, \cite{Macb2}.
In the same way, we see that
\[
\langle A,B,C\mid A^2=B^3=C^7=ABC=S^3=1\rangle
\]
is a group of order 168. So $X$ is Klein's surface of genus 3.

We have proved the following:

\begin{theorem}
\label{theoremH1}
\begin{enumerate}
 \item[$(i)$] No Hurwitz map can have link index equal to $1$.
\item[$(ii)$] A Hurwitz map has link index $2$ if and only if the underlying Riemann
surface is Macbeath's surface of genus $7$.
\item[$(iii)$] A Hurwitz map has link index $3$ if and only if the underlying Riemann surface is Klein's surface of genus $3$.
\end{enumerate}
\end{theorem}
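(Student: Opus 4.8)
The plan is to combine Lemma~\ref{lemmalinkindex} with the abstract group-theoretic computations in Coxeter--Moser style presentations. Recall that any Hurwitz map $\mathcal{M}$ has type $\{3,7\}$, so the only possible pattern is $(\mathbf{010212})^K$ by Theorem~\ref{theorempattern}, and the associated mirror automorphism is, from row 6 of Table~\ref{tablemirroraut} with $m=3$, $n=7$, the element $S=B^{2}CB^{2}\,C^{4}BC^{4}\in Aut\mathcal{M}$. By Lemma~\ref{lemmalinkindex}, the link index $K$ equals the order of $S$ in $Aut\mathcal{M}\cong\Gamma[2,3,7]/K'$ (where $K'$ is the map subgroup). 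So the three statements become statements about the order of a \emph{fixed} word $S$ in a quotient of $\Gamma[2,3,7]$.

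The key observation is that ``the link index equals $d$'' is equivalent to ``$S^{d}=1$ in $Aut\mathcal{M}$ but no smaller power vanishes''. Since $Aut\mathcal{M}$ is a quotient of $\Gamma[2,3,7]$, imposing $S^{d}=1$ defines the universal group $G_d:=\langle A,B,C\mid A^2=B^3=C^7=ABC=S^d=1\rangle$, and every Hurwitz group with link index dividing $d$ is a quotient of $G_d$. The plan is to identify $G_1$, $G_2$, $G_3$ explicitly (via MAGMA, or by hand using coset enumeration). First I would show $G_1$ is trivial: adjoining $S=1$ collapses the $(2,3,7)$ triangle group entirely, which proves (i) because a Hurwitz group is nontrivial. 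Next I would show $|G_2|=504$ and $|G_3|=168$. Given that, since $G_2$ is itself a Hurwitz group (order $504=84\cdot 6$, genus $7$), it \emph{is} Macbeath's surface, and any Hurwitz map with link index dividing $2$ is a quotient of $G_2$; as link index $1$ is excluded by (i), link index exactly $2$ forces the map to be all of $G_2$, i.e.\ Macbeath's surface --- and conversely Macbeath's surface, being a quotient of $G_2$ with $S\ne 1$, has link index exactly $2$. The argument for (iii) is identical: $G_3$ has order $168=84\cdot 2$, genus $3$, so it is Klein's surface; any Hurwitz map of link index dividing $3$ is a quotient, link index $1$ is impossible, and a link index of $3$ (being $>1$ and dividing $3$) forces equality with $G_3=\mathrm{PSL}(2,7)$, Klein's surface, while conversely on Klein's surface $S$ has order exactly $3$ as recorded by Klein's pattern $(\mathbf{010212})^3$.

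One subtlety to dispatch is why a link index dividing $2$ or $3$ and being larger than $1$ must be \emph{exactly} $2$ or $3$ rather than some proper divisor --- but the only proper divisor is $1$, ruled out by (i), so the ``if and only if'' directions go through cleanly. A second subtlety is relating the abstract quotient $G_d$ to an actual Platonic surface: one invokes Riemann--Hurwitz, as in \S6.3, to read off the genus of $\mathbb{H}/K'$ from $|Aut\mathcal{M}|=|\Gamma[2,3,7]:K'|$ via $|Aut\mathcal{M}|=84(g-1)$, confirming $g=7$ for order $504$ and $g=3$ for order $168$; uniqueness of the Hurwitz surface in these genera (Macbeath \cite{Macb2}, Klein \cite{Kle}) then pins down the surface.

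The main obstacle is purely computational: verifying $|G_1|=1$, $|G_2|=504$, $|G_3|=168$. These are finite coset enumerations which MAGMA~\cite{MAGMA} handles instantly, and indeed I would simply cite MAGMA for each, as is done in the discussion preceding the theorem statement; in principle one could also do the enumeration by hand but there is no conceptual content there. Everything else --- the translation to mirror automorphisms, the use of Lemma~\ref{lemmalinkindex}, and the identification of the resulting groups with the Macbeath and Klein surfaces --- is routine given the machinery already established.
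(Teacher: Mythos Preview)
Your proposal follows essentially the same route as the paper: identify the mirror automorphism $S=B^{2}CB^{2}C^{4}BC^{4}$ via Table~\ref{tablemirroraut}, invoke Lemma~\ref{lemmalinkindex} to equate the link index with the order of $S$, and then use MAGMA to compute that the groups $G_d=\langle A,B,C\mid A^{2}=B^{3}=C^{7}=ABC=S^{d}=1\rangle$ have orders $1$, $504$, $168$ for $d=1,2,3$, reading off the genus via $|Aut\mathcal{M}|=84(g-1)$ and appealing to uniqueness of the Hurwitz surface in genera $3$ and $7$. You are in fact somewhat more explicit than the paper about the logical structure of the ``if and only if'' directions.

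One small point worth tightening (it is left implicit in the paper as well): your inference ``link index exactly $2$ forces the map to be all of $G_2$'' does not follow merely from excluding link index $1$. What you actually need is that $G_2\cong\mathrm{PSL}(2,8)$ and $G_3\cong\mathrm{PSL}(2,7)$ are simple, so that any nontrivial quotient --- in particular any Hurwitz group quotient --- coincides with the whole group. Once you state that, the argument is complete.
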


If the link index $K=4$, then MAGMA shows that the automorphism group has order
16515072 so the Hurwitz surface has genus 196609. If $K\geq 5$, then
MAGMA does not give an answer. This means that either no Hurwitz
surface with this link index exists, or possibly the computer is not yet
powerful enough! However, given a Hurwitz group of genus less than 302
we can use \cite{Cond} to find its presentation and then MAGMA will
tell us the order of the word $B^2CB^2C^4BC^4$ and thus we can compute
the link index. These results are displayed in Table~\ref{HurwitzValues}.

\begin{remark}
\label{remarkH1} \emph{It follows from Table~\ref{HurwitzValues} that
different Hurwitz maps can have the same link index.}
\end{remark}

Every mirror on a Hurwitz surface is a combination of the sides of
$(2,3,7)$-triangles. By using sine and cosine rules for hyperbolic
triangles we find that the sides of every $(2,3,7)$-triangle have approximate lengths
$a=0.2831281533$, $b=0.5452748317$ and $c=0.6206717375$. A link $\mathbf{010212}$
corresponds to a segment of a mirror, which consists of six triangle sides
and has length $2(a+b+c)=2.8981494452$. So a mirror of pattern $(\mathbf{010212})^K$
on a Hurwitz surface has length $2K(a+b+c)$. Hence by Theorem~\ref{theoremH1}, we have

\begin{theorem}
\label{theoremHurwitzlength} The length of a mirror on a Hurwitz
surface cannot be less than $5.7962988904$.  This lower bound is attined
if and only if the underlying Riemann surface is Macbeath's surface
of genus $7$. The second shortest mirror on a Hurwitz surface has length
$8.6944483356$. In this case the Riemann surface is Klein's surface of genus $3$.
\end{theorem}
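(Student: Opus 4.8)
The plan is to express the length of a mirror entirely in terms of its link index and then read off the conclusion from the classification of Hurwitz link indices in Theorem~\ref{theoremH1}. First I would record that on a Hurwitz surface the underlying regular map has type $\{3,7\}$ with both $3$ and $7$ odd, so by Theorem~\ref{theorempattern} every mirror of a reflection has pattern $(\mathbf{010212})^K$ for some positive integer $K$, namely its link index. As observed just before the statement, a single link $\mathbf{010212}$ is traced out by six sides of $(2,3,7)$-triangles, and reading the cyclic word $\mathbf{010212}$ one sees these are exactly two copies each of the $\mathbf{01}$-side, the $\mathbf{02}$-side and the $\mathbf{12}$-side; hence a link has length $2(a+b+c)$.

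Next I would compute $a,b,c$ from hyperbolic trigonometry. The $(2,3,7)$-triangle has angles $\pi/2,\pi/3,\pi/7$, and the second hyperbolic law of cosines, $\cosh a=\dfrac{\cos A+\cos B\cos C}{\sin B\sin C}$ (with $a$ the side opposite the angle $A$, etc.), gives each side length explicitly; numerically $a=0.2831281533$, $b=0.5452748317$, $c=0.6206717375$, so $2(a+b+c)=2.8981494452$. Consequently a mirror on a Hurwitz surface has length exactly $2.8981494452\,K$, a strictly increasing function of the discrete parameter $K$, and everything reduces to knowing which values $K$ can take.

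That is exactly Theorem~\ref{theoremH1}. By part $(i)$ we have $K\neq 1$, so $K\geq 2$ and every mirror has length at least $2\cdot 2.8981494452=5.7962988904$; by part $(ii)$ the value $K=2$ is realised precisely by Macbeath's surface of genus $7$, which yields the sharp lower bound together with its characterisation. For the second assertion, a mirror that is not of minimal length has $K\geq 3$ and hence length at least $3\cdot 2.8981494452=8.6944483356$; by part $(iii)$ the value $K=3$ does occur, and only on Klein's surface of genus $3$, so the second shortest mirror length equals $8.6944483356$ and is attained exactly on Klein's surface.

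The argument is essentially mechanical once Theorem~\ref{theoremH1} is granted; the only points requiring a little care are the bookkeeping that a $\mathbf{010212}$ link consists of two triangle sides of each of the three types (so that its length is $2(a+b+c)$ rather than some other combination), the trigonometric evaluation of $a,b,c$, and the remark that, since the length is strictly monotone in $K$, ``second shortest'' unambiguously refers to the length associated with the second smallest realised value of $K$. I do not expect any genuine obstacle here.
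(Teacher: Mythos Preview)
Your proof is correct and follows essentially the same route as the paper: compute the side lengths of the $(2,3,7)$-triangle by hyperbolic trigonometry, observe that a link $\mathbf{010212}$ has length $2(a+b+c)$ so that a mirror has length $2K(a+b+c)$, and then invoke Theorem~\ref{theoremH1} to determine the admissible values of $K$. You have simply made a few of the implicit steps more explicit (the appeal to Theorem~\ref{theorempattern} for the pattern, the bookkeeping that a link contains two sides of each type, and the monotonicity in $K$).
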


\begin{table}[H]
\begin{center}
\caption{Link indices for some Hurwitz maps}
\label{HurwitzValues}
\smallskip
\begin{tabular}{|l|c|c|c|}
  \hline
 Map &  Genus & $|Aut\mathcal{M}|$ & Link index  \\
  \hline%\hline
  \cline{1-4} H1 & 3 & 168 & 3  \\ \hline
  \cline{1-4} H2 & 7 & 504 & 2  \\ \hline
\cline{1-4} H3 & &  & 7  \\
   \cline{1-1}  \cline{4-4} H4 &  14 &1092& 7       \\
    \cline{1-1} \cline{4-4} H5 &  &  & 6    \\ \hline
\cline{1-4} H6 & 118 & 9828 & 13 \\ \hline

\cline{1-4} H7 & 129 & 10752 &  6   \\
\hline

\cline{1-4} H8 & &  & 15  \\
  \cline{1-1}   \cline{4-4}  H9 & 146 &  12180  & 15     \\
  \cline{1-1}  \cline{4-4}  H10  &  &  & 14  \\ \hline
\end{tabular}
\end{center}
\end{table}

\bigskip
\bigskip

\textbf{Acknowledgements}

We would like to thank Marston Conder for help with the section on Hurwitz surfaces
and Gareth Jones for help in the section on Fermat curves.

\end{document}